\newcolumntype{L}[1]{>{\raggedright\let\newline\\\arraybackslash\hspace{0pt}}m{#1}}
\newcolumntype{C}[1]{>{\centering\let\newline\\\arraybackslash\hspace{0pt}}m{#1}}
\newcolumntype{R}[1]{>{\raggedleft\let\newline\\\arraybackslash\hspace{0pt}}m{#1}}
\newcommand{\calL}{\mathcal{L}}
\newcommand{\StiefelV}{\mathrm{St}(p,V)}
\newcommand{\Stiefelpn}{\mathrm{St}(p,n)}
\newcommand{\Sskew}{\mathcal{S}_{\rm skew}(p)}
\newcommand{\Ssym}{\mathcal{S}_{\rm sym}(p)}
\newcommand{\equivcl}[1]{\lcorners #1\rcorners} 
\newcommand{\im}{\mathrm{im}}
\newcommand{\cbf}{\bm{c}}
\definecolor{corrRed}{RGB}{18,124,175} 
\newtheorem{theorem}{Theorem}
\theoremstyle{definition}
\newtheorem{example}[theorem]{Example}
\theoremstyle{remark}
\newtheorem{remark}[theorem]{Remark}
\newtheorem{assumption}[theorem]{Assumption}
\numberwithin{theorem}{section}
\numberwithin{equation}{section}
\numberwithin{table}{section}
\numberwithin{figure}{section}
\newsavebox{\@brx}
\newcommand{\llangle}[1][]{\savebox{\@brx}{\(\m@th{#1\langle}\)}%
	\mathopen{\copy\@brx\kern-0.5\wd\@brx\usebox{\@brx}}}
\newcommand{\rrangle}[1][]{\savebox{\@brx}{\(\m@th{#1\rangle}\)}%
	\mathclose{\copy\@brx\kern-0.5\wd\@brx\usebox{\@brx}}}
\definecolor{myBlue}{RGB}{113,104,238} 
\definecolor{myGreen}{RGB}{154,205,50} 
\definecolor{myGreen2}{RGB}{114,175,30} 
\definecolor{myRed}{RGB}{180,50,50}  
\definecolor{myOrange}{RGB}{225,92,22} 
\definecolor{lgray}{RGB}{200,200,200} 
\definecolor{llgray}{RGB}{155,155,155} 
\definecolor{mycolor1}{rgb}{0.00000,0.44700,0.74100}%
\definecolor{mycolor2}{rgb}{0.85000,0.32500,0.09800}%
\definecolor{mycolor3}{rgb}{0.92900,0.69400,0.12500}%
\definecolor{mycolor4}{rgb}{0.49400,0.18400,0.55600}%
\definecolor{mycolor5}{rgb}{0.46600,0.67400,0.18800}%
\definecolor{mycolor6}{rgb}{0.30100,0.74500,0.93300}%
\definecolor{mycolor7}{rgb}{0.63500,0.07800,0.18400}%
\newcommand\N{\mathbb N}
\newcommand\C{\mathbb C}
\newcommand\R{\mathbb R}
\DeclareMathOperator{\diag}{diag}
\DeclareMathOperator{\Diag}{Diag}
\DeclareMathOperator{\tr}{tr}
\DeclareMathOperator{\trace}{tr}
\DeclareMathOperator{\grad}{grad}
\DeclareMathOperator{\Hess}{Hess}
\DeclareMathOperator{\qf}{qf}
\DeclareMathOperator{\sym}{sym}
\DeclareMathOperator{\Drm}{D}
\newcommand{\GrassV}{\mathrm{Gr}(p,V)}
\newcommand{\Grasspn}{\mathrm{Gr}(p,n)}
\newcommand{\OrthGr}{\mathrm{O}(p)}
\newcommand\calA{\mathcal A}
\newcommand\calB{\mathcal B}
\newcommand\calE{\mathcal E}
\newcommand\calF{\mathcal F}
\newcommand\calH{\mathcal H}
\newcommand\calI{\mathcal I}
\newcommand\calJ{\mathcal J}
\newcommand\calP{\mathcal P}
\newcommand\calR{\mathcal R}
\newcommand\calV{\mathcal V}
\newcommand\fraki{\mathfrak i}
\newcommand\frakj{\mathfrak j}
\newcommand{\etabf}{{\bm \eta}}
\newcommand{\xibf}{{\bm \xi}}
\newcommand{\phibf}{{\bm \phi}}
\newcommand{\zetabf}{{\bm \zeta}}
\newcommand{\psibf}{{\bm \psi}}
\newcommand{\xbf}{\bm{x}}
\newcommand{\ybf}{\bm{y}}
\newcommand{\qbf}{\bm{q}}
\newcommand{\ubf}{\bm{u}}
\newcommand{\vbf}{\bm{v}}
\newcommand{\wbf}{\bm{w}}
\def\dx{\,\text{d}x}
\def\dy{\,\text{d}y}
\newcommand{\out}[2]{\llbracket{#1},{#2}\rrbracket_H}
\begin{document}
\title[Riemannian Newton methods for problems of Kohn--Sham type]{Riemannian Newton methods for energy\\ minimization problems of Kohn--Sham type}
\author[]{R.~Altmann$^{\dagger}$, D.~Peterseim$^{\ddagger}$, T.~Stykel$^{\ddagger}$}
\address{${}^{\dagger}$ Institute of Analysis and Numerics, Otto von Guericke University Magdeburg, Universit\"atsplatz 2, 39106 Magdeburg, Germany}
\address{${}^{\ddagger}$ Department of Mathematics \& Centre for Advanced Analytics and Predictive Sciences (CAAPS), University of Augsburg, Universit\"atsstr.~12a, 86159 Augsburg, Germany}
\email{robert.altmann@ovgu.de, \{daniel.peterseim, tatjana.stykel\}@uni-a.de}
\thanks{The work of Daniel Peterseim is part of a project that has received funding from the European Research Council (ERC) under the European Union's Horizon 2020 research and innovation programme (Grant agreement No.~865751 --  RandomMultiScales).}
\date{\today}
\keywords{}
%
%
\begin{abstract}
This paper is devoted to the numerical solution of constrained energy mi\-ni\-mization problems arising in computational physics and chemistry such as the Gross--Pitaevskii and Kohn--Sham models. In particular, we introduce the Riemannian Newton methods on the infinite-dimensional Stiefel and Grassmann manifolds. We study the geometry of these two manifolds, its impact on the Newton algorithms, and present expressions of the Riemannian Hessians in the infinite-dimensional setting, which are suitable for variational spatial discretizations. A series of numerical experiments illustrates the performance of the methods and demonstrates its supremacy compared to other well-established schemes such as the self-consistent field iteration and gradient descent schemes.  
\end{abstract}
%
%
\maketitle
%
{\tiny {\bf Key words.} Riemannian optimization, Stiefel manifold, Grassmann manifold, Newton method, Kohn--Sham model, Gross--Pitaevskii eigenvalue problem}\\
\indent
{\tiny {\bf AMS subject classifications.} {\bf  65K10}, {\bf 65N25}, {\bf 81Q10}} 
%
%
%
\section{Introduction}
The Kohn--Sham model~\cite{HohK64,KohS65,LeB05} is a prototypical example of a constrained energy minimization problem stated on the infinite-dimensional Stiefel manifold. This means that the sought-after minimizer is
a~$p$-tuple of $L^2$-ortho\-nor\-mal functions. Another well-known example is the Gross--Pitaevskii model for Bose--Einstein condensates of ultracold bosonic gases~\cite{LSY01,PS03}. Here, the special case~$p=1$ is of interest, where we seek a single (minimizing) function on the unit sphere in $L^2$, representing a unit mass constraint. 
Since these two applications are relevant for different communities, numerical methods are mostly considered separately. One aim of this paper is to give a~unified approach to solving energy minimization problems. More precisely, we introduce Riemannian Newton methods for minimizing energy functionals of Kohn--Sham type, which also includes the Gross--Pitaevskii model. In general, the here considered PDE problems require a special treatment in terms of sparsity and dimension-independent methods, which is not part of existing general optimization packages. 

The numerical solution of the Gross--Pitaevskii model has been studied extensively in recent years. The most common numerical techniques are iterative methods based on {\em Riemannian {\em(}conjugate{\em)} gradient  descent methods} or {\em discretized Riemannian gradient flows} in various metrics~\cite{GaP01,BaD04,BCL06,KaE10,RSS09,DanP17,HenP20,Zha22,CheLLZ23}. A~conceptually different approach is the \mbox{\em $J$-method}~\cite{JarKM14,AltHP21} with its inimitable sensitivity with regard to spectral shifts, allowing remarkable speed-ups in a~Rayleigh quotient iteration manner. 
Reformulating the minimization problem as an~eigenvalue problem with eigenvector nonlinearity -- also known as nonlinear eigenvector problem -- the {\em self-consistent field iteration} (SCF) can be employed; see~\cite{Roo51,Can00,DioC07}. This method involves the solution of a linear eigenvalue problem in each step and is strongly connected to the Newton method~\cite{JarU22,HenJ23}. Considering the extended nonlinear system including the normalization constraint also allows a direct application of Newton or Newton-type methods~\cite{BaoT03,COR09,DuL22}. For an extended review on numerical methods for the Gross--Pitaevskii model, we refer to~\cite{HenJ23}. 

Most of the above approaches (with appropriate adjustments) have been applied to the Kohn--Sham model as well. This includes the~{\em direct constrained minimization algorithm}~\cite{YanMW06,AloA09,SchRNB09} and the {\em energy-adapted gradient descent method} \cite{AltPS21} -- both based on Riemannian optimization -- as well as {\em gradient flow schemes}~\cite{DaiWZ20,HuWJ23}. Moreover, the SCF algorithm with different types of mixing  is very popular in the computational chemistry community; see, e.g., \cite{CanL00,Can01,LiuWWUY15,CanKL21,BaiLL22}. For a discretized and simplified Kohn--Sham model (without the external potential and the exchange-correlation energy), global convergence and local second-order convergence of an~inexact Riemannian Newton method on the Grassmann manifold has been shown in~\cite{ZhaoBJ15}. A~collection of software packages for density functional theory problems can be found in \cite{JiaZWW22}. 
  
In this paper, the point of origin is an energy functional defined on the infinite-dimen\-sio\-nal Stiefel manifold, which we introduce in Section~\ref{sect:model}. For a better understanding, we recall definitions and properties of the Stiefel manifold and corresponding retractions in Section~\ref{sect:Stiefel}. Moreover, we provide formulae for the Riemannian gradient and the Riemannian Hessian which are needed for the Newton iteration. Since the considered energy functional is invariant under orthogonal matrices, we also discuss the infinite-dimensional Grassmann manifold and examine a~connection of its tangent space to a~certain subspace of the tangent space of the Stiefel manifold. The resulting Newton algorithms are then subject of Section~\ref{sect:Newton}. In particular, we present an inexact Riemannian Newton method on the Grassmann manifold. In Section~\ref{sect:numerics}, we consider the two mentioned examples of the Gross--Pitaevskii and the Kohn--Sham model in more detail. For both applications, we derive the formulae including a spatial discretization and illustrate the supremacy of the inexact Newton approach compared to well-established methods such as the SCF iteration and gradient descent schemes. 
\smallskip

\textbf{Notation} 
The sets of $p\times p$ real symmetric and skew-symmetric matrices are denoted by $\Ssym$ and $\Sskew$, respectively. For $M\in\R^{p\times p}$, we write~$\sym M=\frac{1}{2}(M+M^T)$ for the symmetric part, and~$\trace M$ denotes the trace of $M$. Further, $I_p$ and $0_p$ denote the $p\times p$ identity and zero matrices, respectively. The expression~$\diag(M)$ defines the column vector consisting of the diagonal elements of $M\in\R^{n\times n}$ and~$\Diag(v)$ denotes the diagonal matrix with components of the vector $v\in\R^n$ on the diagonal.
%
%
\section{The Energy Functional and Nonlinear Eigenvector Problems}\label{sect:model}
For a~given spatial domain $\Omega\subseteq \R^d$, $d\le3$, we consider the Hilbert spaces $L^2(\Omega)$ and
 \mbox{$\tilde{V}\!\subseteq\! H^1(\Omega)$}. 
For $p\geq 1$, we further define the Hilbert spaces $V=\tilde{V}^p$ and $H=[L^2(\Omega)]^p$ of $p$-frames. Throughout this paper, we assume that~$V$ is dense in $H$ and that $V\subseteq H\subseteq V^*$ form a~Gelfand triple, where $V^*$ denotes the dual space of $V$. 

For $\vbf = (v_1, \dots, v_p), \wbf = (w_1, \dots, w_p) \in H$, we define the~dot product 
\[
\vbf\cdot\wbf = \sum_{j=1}^p v_j w_j.
\] 
On the pivot space $H$, we further introduce an~outer product
\begin{equation}\label{eq:outer}
	\out{\vbf}{\wbf} 
	= \begin{bmatrix}
		(v_1, w_1)_{L^2(\Omega)} & \dots & (v_1, w_p)_{L^2(\Omega)} \\
		\vdots & \ddots & \vdots \\ 
		(v_p, w_1)_{L^2(\Omega)} & \dots & (v_p, w_p)_{L^2(\Omega)}
	\end{bmatrix}
	\in \R^{p\times p}
\end{equation}
and an~inner product
\begin{equation}\label{eq:inner}
	(\vbf, \wbf)_H 
	= \sum_{j=1}^p (v_j, w_j)_{L^2(\Omega)} 
	= \trace\, \out{\vbf}{\wbf}.
\end{equation}
%
The inner product \eqref{eq:inner} induces the norm~$\|\vbf\|_H=\sqrt{(\vbf,\vbf)_H}$ on $H$. The canonical identification $\calI\colon V\to V^*$ is defined by 
\[
	\langle\,\calI\vbf,\wbf\rangle=(\vbf,\wbf)_H\qquad 
	\text{for all } \vbf,\wbf\in V,
\]
where $\langle\,\cdot\,,\cdot\,\rangle$ denotes the duality pairing on $V^*\times V$. This identification operator can also be written in the form~$\calI = \frakj^*\circ \fraki_H\circ \frakj$ with the trivial embedding~$\frakj\colon V\to H$ (the injective identity operator), the Riesz isomorphism~$\fraki_H\colon H\to H^*$, which reads $\fraki_H(u) = (u,\,\cdot\,)_H$, and the adjoint operator $\frakj^*\colon H^*\to V^*$ satisfying $\frakj^*(f)=f\circ \frakj$ for all $f\in H^*$. Since all these operators act componentwisely, we have $\calI(\vbf\Lambda) = \calI(\vbf)\Lambda$ for all $\vbf\in V$ and $\Lambda\in\R^{p\times p}$. 
Moreover, since $V$ is a dense subspace of $H$, so is $\frakj(V)$. Hence, $\frakj^*$ is injective and as the composition of injective operators, $\calI$ is also injective. As a result, $\calI$ has a~left inverse $\calJ\colon V^*\to V$ such that $\calJ\calI\vbf=\vbf$ for all $\vbf\in V$. 
%
%
\subsection{Energy and applications}\label{sect:model:energy}
For a $p$-frame~$\phibf\in V$, we consider the energy functional 
\begin{align}
	\calE(\phibf)
	&= \frac 12\, \int_{\Omega} \tr\bigl((\nabla \phibf(x))^T\nabla\phibf(x)\bigr) \dx  
	+ \int_{\Omega} \vartheta(x)\, \rho(\phibf(x)) \dx 
	+ \frac12\, \int_{\Omega} \varGamma(\rho(\phibf(x))) \dx
	\label{eq:energy}
\end{align}
with an external potential~$\vartheta$, the density function $\rho(\phibf) = \phibf\cdot \phibf$, 
and the smooth nonlinearity $\varGamma(\rho)$. Our aim is to minimize this energy functional on the \emph{infinite-dimensional Stiefel manifold of index $p$} given by 
\begin{equation}\label{eq:StiefelV}
	\StiefelV
	= \big\{ \phibf\in V\enskip :\enskip \out{\phibf}{\phibf} = I_p \big\}.
\end{equation}
In other words, we are interested in solving the constrained minimization problem 
\begin{equation}\label{eq:minSt}
	\min_{\phibf\in \StiefelV} \calE(\phibf). 
\end{equation}
A~state of lowest energy is called the {\em ground state}. Such states play an~important role in quantum-mechanical models  as they represent a most stable configuration of atoms and molecules. These models include two famous applications in computational physics and chemistry. 
\begin{example}[Gross--Pitaevskii model]
\label{exp:GPEVP}
For $p=1$ and $\varGamma(\rho) = \frac 12 \kappa \rho^2$ with $\kappa\in\mathbb{R}$, the energy functional takes the form
\begin{align}
	\calE_{\rm GP}(\phi)
	= \frac 12\, \int_{\Omega} \|\nabla \phi(x)\|^2 \dx
	+ \int_{\Omega} \vartheta(x)\, \phi(x)^2 \dx 
	+ \frac\kappa4\, \int_{\Omega} \phi(x)^4 \dx.
	\label{eq:energyGP}
\end{align}
This is the well-known Gross--Pitaevskii energy used in the modeling of Bose--Einstein condensates of ultracold bosonic gases~\cite{LSY01,PS03}. Here,  $\vartheta \in L^\infty(\Omega)$ is the magnetic trapping potential, $\phi\in H_0^1(\Omega)$ is the quantum state of the Bose--Einstein condensate, and~$\kappa$ characterizes the strength and the direction of particle interactions. 
\end{example}
\begin{example}[Kohn--Sham model]
\label{exp:KS}
The (non-local) nonlinearity  
\[
	\varGamma(\rho) 
    = \rho\, \int_{\Omega} \frac{\rho(\phibf(y))}{\|x-y\|} {\,\rm d}y 
    + 2\,\rho\,\epsilon_\text{xc}(\rho)
\]
yields the Kohn--Sham energy functional 
\begin{align}
	\calE_{\rm KS}(\phibf)
	&= \frac 12\, \sum_{j=1}^p \int_{\Omega} \|\nabla \phi_j(x)\|^2 \dx
	+ \int_{\Omega} \vartheta_\text{ion}(x)\, \rho(\phibf(x)) \,{\rm d} x \notag \\
	&\qquad+ \frac 12 \int_{\Omega}\int_{\Omega} \frac{\rho(\phibf(x))\, \rho(\phibf(y))}{\|x-y\|} {\,\rm d}y \dx
	+ \int_{\Omega}\rho(\phibf(x))\, \epsilon_\text{xc}(\rho(\phibf(x))) \dx,
	\label{eq:energyKS}
\end{align}
where $\phibf$ denotes a~wave function with $p$ components called single-particle orbitals and $\rho(\phibf)$ is the electronic charge density.  Moreover, $\vartheta_\text{ion}$ is the ionic potential, and $\epsilon_\text{xc}(\rho)$ is the exchange-correlation energy per particle in a~homogeneous electron gas of density~$\rho$. This model is based on the so-called \emph{density functional theory} \cite{HohK64}, which allows a~significant reduction of the degrees of freedom \cite{KohS65,LeB05,CanCM12}. The last integral in~\eqref{eq:energyKS} is a~local density approximation to the exchange-correlation energy obtained by using semi-empirically knowledge of the model~\cite{PerZ81}. In the Kohn--Sham model,  a~ground state corresponds to the low-energy wave function of the considered molecule and the orthogonality condition $\out{\phibf}{\phibf} = I_p$ means that there is no interaction between the electrons in different orbitals. 
\end{example}
At this point, it should be emphasized that, since the energy functional $\calE$ in  \eqref{eq:energy} is invariant under orthogonal transformations, i.e. $\calE(\phibf)=\calE(\phibf Q)$ for all orthogonal matrices $Q\in \R^{p\times p}$, the optimal solution to the minimization problem~\eqref{eq:minSt} is not unique. 
To overcome this difficulty, we will transfer this problem to the infinite-dimensional Grassmann manifold defined in Section~\ref{sec:Grassmann}.
%
%
\subsection{Connection to nonlinear eigenvector problems}\label{sect:model:NLEVP}
We observe that the directional derivative of $\calE$ from~\eqref{eq:energy} at $\phibf\in V$ along $\wbf\in V$ has the form
\[
	{\rm D}\calE(\phibf)[\wbf] 
	= a_\phibf(\phibf,\wbf),
\]
where 
\begin{align}
	a_\phibf(\vbf,\wbf)
	= \int_\Omega \tr\bigl((\nabla \vbf)^T\nabla\wbf\bigr) \dx 
	+ 2 \int_\Omega  \vartheta\, \vbf\cdot\wbf \dx 
	+ \int_\Omega  \gamma(\rho(\phibf))\, \vbf\cdot\wbf \dx 
	\label{eq:aphi}
\end{align}
with $\gamma(\rho)=\frac{\rm d}{{\rm d}\rho}\varGamma(\rho)$. One can see that for fixed $\phibf\in V$, $a_\phibf$ is a symmetric bilinear form on $V\times V$. Further note that~$a_\phibf$ exhibits a special structure, namely  
\begin{equation}\label{eq:aphi}
	a_\phibf(\vbf,\wbf)
	= \sum_{j=1}^p \tilde a_\phibf(v_j, w_j) 
\end{equation}
with a symmetric bilinear form~$\tilde a_\phibf\colon \tilde{V}\times \tilde{V}\to \R$ given by
\[
	\tilde a_\phibf(v, w) 
	= \int_\Omega (\nabla v)^T\nabla w \dx 
	+ 2 \int_\Omega \vartheta\, v w \dx 
	+ \int_\Omega \gamma(\rho(\phibf))\, v w \dx.
\]
Within this paper, we assume that~$\tilde a_\phibf$ is bounded and coercive on~$\tilde{V}\times \tilde{V}$. Obviously, the bilinear form $a_\phibf$ inherits these properties such that $a_{\phibf}$ is also bounded and coercive on~$V\times V$. 

Introducing the Lagrangian $\calL(\phibf,\Lambda)=\calE(\phibf)-\frac{1}{2}\trace\big(\Lambda^T(\out{\phibf}{\phibf} - I_p)\big)$ with a~Lagrange multiplier $\Lambda\in\Ssym$, the first-order necessary optimality conditions for the minimization problem \eqref{eq:minSt} yield the~nonlinear eigenvector problem (NLEVP)
\begin{subequations}
\label{eq:NLEVP}
\begin{align}
	a_{\phibf_*}(\phibf_*,\wbf)-(\phibf_*\, \Lambda_*, \wbf)_H  
	&= 0 \qquad \text{for all }\wbf\in V,\\
	\out{\phibf_*}{\phibf_*} - I_p 
	&= 0_p 
\end{align}
\end{subequations}
with unknown~$\phibf_*\in V$, which is referred to as  the eigenvector, and $\Lambda_*\in\Ssym$, whose eigenvalues are the lowest $p$ eigenenergies of the system.  
Yet another formulation of the NLEVP~\eqref{eq:NLEVP} follows from the special structure of the bilinear form~$a_\phibf$ in~\eqref{eq:aphi}: seek~$\phibf_*=(\phi_{*,1}, \dots, \phi_{*,p})\in \StiefelV$ and $p$~eigenvalues $\lambda_1,\dots,\lambda_p\in\R$ such that 
\begin{align}
	\label{eq:NLEVPweakComponents}
	\tilde a_{\phibf_*}(\phi_{*,j}, v) 
	= \lambda_j\, (\phi_{*,j}, v)_{L^2(\Omega)}
	\qquad\text{ for all } v\in \tilde V.
\end{align}

For fixed $\phibf\in V$, we introduce the~operator $\calA_\phibf \colon V\to V^*$, defined by   
\[
	\langle\calA_\phibf \,\vbf, \wbf\rangle 
	= a_\phibf(\vbf,\wbf) \qquad\text{for all } \vbf,\wbf\in V.
\]
Then the NLEVP \eqref{eq:NLEVP} can be written as
\begin{subequations}
	\label{eq:NLEVPop}
	\begin{align}
		\calA_{\phibf_*} \phibf_*-\calI(\phibf_* \Lambda_*) 
		&= \mathbf{0}^{*}, \label{eq:NLEVPop:a}\\
		\out{\phibf_*}{\phibf_*} - I_p 
		&=  0_p, \label{eq:NLEVPop:b}
	\end{align}
\end{subequations}
where $\mathbf{0}^{*}\in V^*$ is the zero functional. Using the left inverse $\calJ$ of $\calI$, we find that 
\begin{align}\label{eq:Lambda}
	\Lambda_*
	= \out{\phibf_*}{\phibf_*} \Lambda_*
	= \out{\phibf_*}{\phibf_*\, \Lambda_*}
	= \out{\phibf_*}{\calJ\!\calA_{\phibf_*} \,\phibf_*}.
\end{align}
\begin{remark}\label{rem:symmetryBracket}
Due to the symmetry of the bilinear form~$\tilde a_\phibf$, we conclude that 
\begin{align*}
	\big( \phi_i, (\calJ\!\calA_\phibf \,\phibf)_j \big)_{L^2(\Omega)}
	= \tilde a_{\phibf}(\phi_i, \phi_j)
	= \tilde a_{\phibf}(\phi_j, \phi_i)
	= \big( \phi_j, (\calJ\!\calA_\phibf \,\phibf)_i \big)_{L^2(\Omega)}, \quad i,j=1,\ldots,p.
\end{align*}
This means that $\out{\phibf}{\calJ\!\calA_\phibf \,\phibf}$ is symmetric for any $\phibf\in V$. 
\end{remark}
%
%
\section{The Infinite-dimensional Stiefel and Grassmann Manifolds}\label{sect:Stiefel}
In this section, we summarize definitions and properties of the infinite-dimensional Stiefel and Grassmann manifolds and their tangent spaces, which lay the foundation of the Riemannian optimization schemes in the upcoming section. 
%
%
\subsection{The Stiefel manifold}
We consider the infinite-dimensional Stiefel manifold $\StiefelV$ defined in~\eqref{eq:StiefelV}. It is an~embedded submanifold of the Hilbert space~$V$ and has co-dimension $p\,(p+1)/2$; see \cite{AltPS21}.  The {\em tangent space} of $\StiefelV$ at $\phibf\in \StiefelV$ is given by  
\begin{align}\label{eq:TangSpaceSt} 
	T_\phibf\,\StiefelV
	= \big\{ \etabf\in V\enskip :\enskip \out{\etabf}{\phibf} + \out{\phibf}{\etabf} = 0_p \big\}. 
\end{align}
The Riemannian structure of the Stiefel manifold $\StiefelV$ strongly depends on an underlying metric.
Within this paper, we equip $\StiefelV$ with the Hilbert metric given by
\begin{equation}\label{eq:Hmetric}
	g(\etabf,\zetabf)
	=(\etabf,\zetabf)_H
	=\trace\, \out{\etabf}{\zetabf},
	\qquad \etabf,\zetabf\in T_\phibf\,\StiefelV. 
\end{equation}
The {\em normal space} with respect to $g$ is then defined as 
\[	
	T_\phibf^{\perp} \,\StiefelV
	= \bigl\{ \xbf \in V\enskip :\enskip g(\xbf,\etabf)=0 \text{ for all } \etabf\in T_\phibf\,\StiefelV \bigr\}.
\]
It can also be represented as 
\begin{equation}\label{eq:normalS}
T_\phibf^{\perp} \,\StiefelV
= \bigl\{ \phibf S \in V\enskip :\enskip S\in \Ssym\bigr\}.
\end{equation}
Further, any $\ybf\in V$ can be decomposed as $\ybf = \calP_{\phibf}^{}(\ybf) + \calP_{\phibf}^\perp(\ybf)$, where
\begin{align}
\label{eq:def:HProj}	
	\calP_{\phibf}(\ybf)
	= \ybf - \phibf \sym \out{\phibf}{\ybf} \qquad\text{and}\qquad
	\calP_{\phibf}^{\perp}(\ybf)
	=\phibf \sym \out{\phibf}{\ybf}
\end{align}
are  the orthogonal projections onto the tangent and normal spaces, respectively. 

The \emph{Riemannian gradient} of a~smooth function $\,\calE\colon\StiefelV\to\R$ with respect to the Hilbert metric~$g$ is the unique element $\grad \calE(\phibf)\in T_\phibf\,\StiefelV$ satisfying the condition 
\[
	g(\grad \calE(\phibf), \etabf) = \Drm \overline{\calE}(\phibf)[\etabf] \qquad \text{ for all }\etabf\in T_\phibf\,\StiefelV,
\]
where $\overline{\calE}$ denotes a~smooth extension of $\calE$ around $\phibf$ in $V$ and $\Drm \overline{\calE}(\phibf)$ is the Fr\'echet derivative of $\,\overline{\calE}$ in $V$.

For the energy functional $\calE$ in~\eqref{eq:energy}, the Riemannian gradient at $\phibf\in \StiefelV$ with respect to $g$ can be determined by using the $L^2$-Sobolev gradient $\nabla\,\overline{\calE}(\phibf)\in V$ which is defined as the Riesz representation of $\Drm\overline{\calE}(\phibf)$ in the Hilbert space~$V$ with respect to the inner product $g$. Then, for all $\wbf\in V$, we have 
\[
	\langle\calA_\phibf \,\phibf, \wbf\rangle 
	= a_\phibf(\phibf,\wbf) 
	= \Drm\overline{\calE}(\phibf)[\wbf]
	= \big(\nabla \,\overline{\calE}(\phibf),\wbf \big)_H 
	= \big\langle\calI \,\nabla \,\overline{\calE}(\phibf), \wbf \big\rangle
\]
and, hence, $\nabla\,\overline{\calE}(\phibf) =\calJ\!\calA_\phibf\,\phibf$. Furthermore, for all $\etabf\in T_{\phibf}\,\StiefelV$, we obtain 
\[
\big(\grad \calE(\phibf), \etabf\big)_H 
= \Drm\overline{\calE}(\phibf)[\etabf]
= \big(\nabla \,\overline{\calE}(\phibf),\etabf \big)_H. 
\]
This implies that 
\begin{equation}\label{eq:gradE_H}
	\grad \calE(\phibf) 
	= \calP_{\phibf}\big(\nabla\, \overline{\calE}(\phibf)\big)
	= \calP_{\phibf}\big(\calJ\!\calA_\phibf\,\phibf\big)
	= \calJ\!\calA_\phibf\,\phibf-\phibf\, \out{\phibf}{\calJ\!\calA_\phibf\,\phibf}.
\end{equation}

The \emph{Riemannian Hessian} of $\,\calE$ at  $\phibf\in\StiefelV$ with respect to the metric~$g$, denoted by $\Hess\calE(\phibf)$, is a~linear mapping on the tangent space $T_\phibf\,\StiefelV$ into itself which is defined by 
\[
\Hess\calE(\phibf)[\etabf] = \nabla_{\etabf} \grad \overline{\calE}(\phibf) \qquad \text{for all }\etabf\in T_\phibf\,\StiefelV,
\]
where $\nabla_{\etabf}$ denotes the covariant derivative along $\etabf$ with respect to the connection $\nabla$, cf.~\cite[Sect.~5.3]{AbsiMS08} for the finite-dimensional case.

The following theorem provides two expressions for the Riemannian Hessian of $\calE$ in terms of the directional derivative of $\grad \calE(\phibf)$ and the $L^2$-Sobolev Hessian $\nabla^2\,\overline{\calE}(\phibf)$ of~$\,\overline{\calE}$, which is a~linear operator mapping $\vbf\in V$ onto the Riesz representation of~$\Drm^2\overline{\calE}(\phibf)[\vbf,\,\cdot\,]$ with respect to the inner product~$(\,\cdot\,,\cdot\,)_H$. 
\begin{theorem}\label{th:HessSt}
Let $\phibf\in\StiefelV$ and $\etabf\in T_{\phibf}\,\StiefelV$. Then the Riemannian Hessian of a smooth function $\calE\colon\StiefelV\to \mathbb{R}$ admits the expressions
\begin{align}
	\Hess\calE(\phibf)[\etabf] 
	& = \calP_{\phibf} \bigl(\Drm\grad\calE(\phibf)[\etabf]\bigr) \label{eq:HessSt1}\\
	& = \calP_{\phibf}\big( \nabla^2\,\overline{\calE}(\phibf)[\etabf]-\etabf\sym \out{\phibf}{\nabla\,\overline{\calE}(\phibf)}\big),\label{eq:HessSt2}
\end{align}
where $\nabla\,\overline{\calE}(\phibf)$ and $\nabla^2\,\overline{\calE}(\phibf)$ denote, respectively, the $L^2$-Sobolev gradient and the $L^2$-Sobolev Hessian of a~smooth extension $\,\overline{\calE}$ of $\,\calE$ around $\phibf$ in $V$.
\end{theorem}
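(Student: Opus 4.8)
The plan is to exploit that $\StiefelV$ is a Riemannian embedded submanifold of the Hilbert space $V$, so that its Levi--Civita connection is obtained by projecting the ambient (flat) connection onto the tangent space. Since the ambient connection on the flat space $V$ is simply the Fr\'echet directional derivative $\Drm$, the Gauss formula yields, for any smooth tangent vector field $X$ and any $\etabf\in T_\phibf\StiefelV$,
\[
\nabla_\etabf X = \calP_{\phibf}\bigl(\Drm X(\phibf)[\etabf]\bigr),
\]
in complete analogy with the finite-dimensional case of \cite[Sect.~5.3]{AbsiMS08}. Applying this to the Riemannian gradient vector field $X=\grad\calE$ and recalling the definition $\Hess\calE(\phibf)[\etabf]=\nabla_\etabf\grad\calE(\phibf)$ immediately gives the first expression~\eqref{eq:HessSt1}. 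Here $\grad\calE$ is extended off the manifold by the right-hand side of~\eqref{eq:gradE_H}, which defines a smooth $V$-valued map on a neighborhood of $\phibf$; the value $\calP_{\phibf}(\Drm\grad\calE(\phibf)[\etabf])$ is independent of this extension, since $\etabf$ is tangent and the directional derivative therefore only probes the restriction of $\grad\calE$ to $\StiefelV$.

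To pass from~\eqref{eq:HessSt1} to~\eqref{eq:HessSt2}, I would differentiate the explicit gradient formula~\eqref{eq:gradE_H}. Writing $G(\phibf)=\nabla\,\overline{\calE}(\phibf)$ and $M(\phibf)=\sym\out{\phibf}{G(\phibf)}\in\Ssym$, we have $\grad\calE(\phibf)=G(\phibf)-\phibf\,M(\phibf)$, and the product rule gives
\[
\Drm\grad\calE(\phibf)[\etabf]
= \Drm G(\phibf)[\etabf]-\etabf\,M(\phibf)-\phibf\,\Drm M(\phibf)[\etabf].
\]
The first term equals $\nabla^2\,\overline{\calE}(\phibf)[\etabf]$: differentiating the defining identity $(G(\phibf),\wbf)_H=\Drm\overline{\calE}(\phibf)[\wbf]$ in the direction $\etabf$ yields $(\Drm G(\phibf)[\etabf],\wbf)_H=\Drm^2\overline{\calE}(\phibf)[\etabf,\wbf]=(\nabla^2\,\overline{\calE}(\phibf)[\etabf],\wbf)_H$ for all $\wbf\in V$, whence the claim by Riesz representation. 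The crucial observation is that the last term is normal: since $M$ takes values in the linear space $\Ssym$, its derivative $\Drm M(\phibf)[\etabf]$ is again symmetric, so $\phibf\,\Drm M(\phibf)[\etabf]\in T_\phibf^{\perp}\StiefelV$ by~\eqref{eq:normalS} and is therefore annihilated by $\calP_{\phibf}$. Substituting into~\eqref{eq:HessSt1} and dropping this term produces exactly~\eqref{eq:HessSt2}.

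The main obstacle is the rigorous justification of the projection formula for the connection in the first paragraph. In the present infinite-dimensional setting, the metric $g$ is the $H$-inner product restricted to tangent spaces that sit inside $V$, hence a \emph{weak} Riemannian metric relative to the $V$-topology, and the existence of a Levi--Civita connection is not automatic for weak Riemannian manifolds. What rescues the argument is the special structure at hand: $\StiefelV$ is cut out by the smooth constraint $\out{\phibf}{\phibf}=I_p$, the tangent and normal spaces admit the explicit $g$-orthogonal splitting~\eqref{eq:def:HProj}--\eqref{eq:normalS}, and the projection $\calP_{\phibf}$ is bounded. I would therefore define $\widetilde\nabla_\etabf X:=\calP_{\phibf}(\Drm X(\phibf)[\etabf])$ and verify directly, using the formulas for $\calP_{\phibf}$ and $g$, that it is $V$-valued, bilinear, torsion-free and metric-compatible; since the Koszul identity forces any Levi--Civita connection to coincide with it, $\widetilde\nabla$ \emph{is} the connection entering the definition of $\Hess$. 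This reduces the infinite-dimensional statement to the same algebra as in finite dimensions, after which the product-rule manipulation of the second paragraph completes the proof.
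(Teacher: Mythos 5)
Your proof is correct and follows essentially the same route as the paper: \eqref{eq:HessSt1} via the projection characterization of the Hessian on an embedded submanifold of $V$, and \eqref{eq:HessSt2} by differentiating the explicit gradient formula $\grad\calE(\phibf)=\nabla\,\overline{\calE}(\phibf)-\phibf\sym\out{\phibf}{\nabla\,\overline{\calE}(\phibf)}$ with the product rule and discarding the normal component $\phibf\,\Drm M(\phibf)[\etabf]\in T_\phibf^{\perp}\,\StiefelV$ (the paper packages the same cancellation as the derivative $\Drm\calP_{\phibf}[\etabf]$ of the projector along a curve). Your extra care about the weak Riemannian metric and the Koszul-uniqueness argument for the projected connection is a more rigorous justification of \eqref{eq:HessSt1} than the paper's brief duality computation, but it does not change the substance of the argument.
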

\begin{proof} 
Since $\StiefelV$ is an embedded submanifold of the Hilbert space $V$, the expression~\eqref{eq:HessSt1} can be shown similarly to the finite-dimensional case \cite[Prop.~5.3.2]{AbsiMS08}. Indeed, for all $\etabf,\xibf\in T_\phibf\,\StiefelV$, we have
\begin{align*}
\big(\Hess\calE(\phibf)[\etabf],\xibf\big)_H 
& =  \Drm^2 \overline{\calE}(\phibf)[\etabf,\,\xibf\,] 
=  \Drm\!\big(\! \Drm\overline{\calE}(\phibf)[\xibf]\big)[\etabf]\\
& = \Drm\!\big( \!\grad \calE(\phibf),\xibf\big)_H[\etabf]
= \big(\!\Drm \grad \calE(\phibf)[\etabf],\xibf\big)_H.
\end{align*}
This immediately implies \eqref{eq:HessSt1}.  

In order to prove \eqref{eq:HessSt2}, we first compute the directional derivative 
\begin{equation}\label{eq:DgradE}
	\Drm\grad\calE(\phibf)[\etabf] 
	= \Drm\big(\calP_{\phibf}(\nabla\,\overline{\calE}(\phibf)\big)[\etabf]
	 = \calP_{\phibf}(\nabla^2\,\overline{\calE}(\phibf)[\etabf]) 
	 + \Drm \calP_{\phibf}[\etabf]\nabla\,\overline{\calE}(\phibf).
\end{equation}
%
%
Let $\cbf(t)\subset \StiefelV$ be a smooth curve defined on a~neighborhood of $t=0$ such that $\cbf(0)=\phibf$ and $\tfrac{{\rm d}}{{\rm d}t}\cbf(0)=\etabf$. Then for all $\ybf\in V$, we have
\begin{align*}
	\Drm \calP_{\phibf}[\etabf]\, \ybf 
	& = \lim_{t\to 0}\frac{1}{t} \big(\calP_{\cbf(t)}(\ybf)-\calP_{\phibf}(\ybf)\big) \\
	& = \lim_{t\to 0}\frac{1}{t} \big(\ybf-\cbf(t)\sym\out{\cbf(t)}{\ybf}-\ybf+\cbf(0)\sym\out{\cbf(0)}{\ybf}\big)\\
	& = -\lim_{t\to 0}\frac{1}{t} \big(\cbf(t)\sym\out{\cbf(t)-\cbf(0)}{\ybf}+(\cbf(t)-\cbf(0))\sym\out{\cbf(0)}{\ybf}\big)\\
	& = -\phibf \sym\out{\etabf}{\ybf}-\etabf\sym\out{\phibf}{\ybf}.
\end{align*}
Inserting \eqref{eq:DgradE} into \eqref{eq:HessSt1} and taking into account that 
\begin{align*}
	\calP_{\phibf}\big(\Drm \calP_{\phibf}[\etabf] \nabla\,\overline{\calE}(\phibf)\big)
 	&= -\calP_{\phibf}\big(\phibf \sym\out{\etabf}{\nabla\,\overline{\calE}(\phibf)}+\etabf\sym\out{\phibf}{\nabla\,\overline{\calE}(\phibf)}\big) \\ 
 	&= -\calP_{\phibf}\big(\etabf\sym\out{\phibf}{\nabla\,\overline{\calE}(\phibf)}\big), 
\end{align*}
we obtain \eqref{eq:HessSt2}.
\end{proof}
In order to derive a formula for the Riemannian Hessian of the energy functional~$\calE$ in~\eqref{eq:energy}, we first compute the second-order derivative
\begin{align*}
	\Drm^2\overline{\calE} (\phibf)[\vbf,\wbf] 
	&= \lim_{t\to 0}\ \frac{1}{t}\, \big\langle \calA_{\phibf+t\vbf} (\phibf+t\vbf)
	-\calA_\phibf\,\phibf,\wbf \big\rangle \\
	&= \lim_{t\to 0}\ \frac{1}{t}\,\bigg(
	\int_\Omega \Big(\tr\bigl((\nabla (\phibf+t\vbf))^T\nabla\wbf\bigr)-\tr\bigl((\nabla \phibf)^T\nabla\wbf\bigr)\Big) \dx  \\
	&\qquad\qquad\quad +2\int_\Omega  \vartheta\, \big((\phibf+t\vbf)\cdot\wbf -\phibf\cdot\wbf\big) \dx \\
	&\qquad\qquad\quad + \int_\Omega \big(\gamma(\rho(\phibf+t\vbf))\, (\phibf+t\vbf)\cdot \wbf-\gamma(\rho(\phibf))\, \phibf\cdot \wbf \big)  \dx \bigg)\\
	&= \int_\Omega \tr\bigl((\nabla \vbf)^T\nabla\wbf\bigr) \dx  
	+ 2\int_\Omega  \vartheta\, \vbf\cdot\wbf \dx \\
  	&\quad + \int_\Omega \gamma(\rho(\phibf))\, \vbf\cdot \wbf \dx  
	+ 2\int_{\Omega} \beta(\rho(\phibf)) (\phibf\cdot\vbf)\, (\phibf\cdot \wbf) \dx \nonumber \\
	&= \langle \calA_\phibf\, \vbf + \calB_\phibf\,\vbf,\wbf\rangle,
\end{align*}
where $\beta(\rho)=\frac{\rm d}{{\rm d}\rho}\gamma(\rho)$ and the operator $\calB_\phibf\colon V\to V^*$ has the form
\begin{equation}\label{eq:Bphi}
	\langle \calB_\phibf\,\vbf,\wbf\rangle 
	= 2\int_{\Omega} \beta(\rho(\phibf)) (\phibf\cdot\vbf)\, (\phibf\cdot \wbf) \dx.
\end{equation}
Hence, the $L^2$-Sobolev Hessian of  $\,\overline{\calE}$ is given by $\nabla^2\,\overline{\calE}(\phibf)[\vbf]=\calJ\!\calA_\phibf\, \vbf+  \calJ\calB_\phibf\,\vbf$ for all $\vbf\in V$.
By the definition of the orthogonal projection onto~$T_\phibf\,\StiefelV$ in~\eqref{eq:def:HProj}, we conclude that for~$\etabf\in T_\phibf\,\StiefelV$ the Riemannian Hessian of~$\calE$ is given by 
\begin{align}
	\Hess\calE(\phibf) [\etabf] 
	&= \calP_{\phibf} \big( \calJ\!\calA_\phibf\, \etabf+  \calJ\calB_\phibf\,\etabf-\etabf\,  \out{\phibf}{ \calJ\!\calA_\phibf\,\phibf}\big) \notag \\
	&= \calJ\!\calA_\phibf\, \etabf + \calJ\calB_\phibf\,\etabf 
	-\etabf\,  \out{\phibf}{\calJ\!\calA_\phibf\,\phibf}
	-\phibf\sym \out{\phibf}{\calJ\!\calA_\phibf\, \etabf} \label{eq:defHess} \\
	&\qquad -\phibf\sym \out{\phibf}{\calJ\calB_\phibf\, \etabf}
	+ \phibf\sym \big(\out{\phibf}{\etabf}\out{\phibf}{\calJ\!\calA_\phibf\, \phibf)}\big). \notag
\end{align}

Within optimization methods, we need to transfer data from the tangent space to the manifold to keep the iterations on the search space. For this purpose, we can use retractions defined as follows. Let $T\StiefelV$ 
be the tangent bundle to $\StiefelV$. A~smooth mapping $\mathcal{R}\colon T\StiefelV \to \StiefelV$ is called a~\emph{retraction} if for all $\phibf\in\StiefelV$, the restriction of $\calR$ to $T_\phibf\,\StiefelV$, denoted by $\calR_\phibf$, satisfies the following properties:
\begin{enumerate}
	\item[1)] $\calR_\phibf(\bm{0}_\phibf)=\phibf$, where $\bm{0}_\phibf$ denotes the origin of $T_\phibf\,\StiefelV$, and 
	\item[2)] $\tfrac{{\rm d}}{{\rm d}t} \calR_\phibf (t\etabf)\big|_{t=0}=\etabf$ for all $\etabf\in T_\phibf\,\StiefelV$.
\end{enumerate}
Retractions provide first-order approximations to the~exponential mapping on a~Riemannian manifold  and are often much easier to compute. A retraction $\calR$ on $\StiefelV$ is of \emph{second-order}, if it satisfies $\tfrac{{\rm d}^2}{{\rm d}t^2} \calR_\phibf (t\etabf)\big|_{t=0}\in T_\phibf^{\perp}\,\StiefelV$ for all $(\phibf,\etabf)\in T\StiefelV$.

In \cite{AltPS21}, several retractions on the Stiefel manifold $\StiefelV$ have been introduced. They can be considered as an~extension of the corresponding concepts on the matrix Stiefel manifold (see, e.g., \cite{AbsiM12,SatA19}) to the infinite-dimensional case. 

For $\vbf\in V$ with linearly independent components, we consider the $qR$~decomposition  $\vbf=\qbf R$, where 
$\qbf\in\StiefelV$ and $R\in\mathbb{R}^{p\times p}$ is upper triangular. Such a~decomposition exists and is unique if we additionally require that $R$ has positive diagonal elements. Then the {\em $qR$~decomposition based retraction} is defined as
$\calR^{qR}(\phibf,\etabf)=\qf(\phibf+\etabf)$, where $\qf(\phibf+\etabf)$ denotes the factor from $\StiefelV$ in the 
$qR$ decomposition of $\phibf+\etabf$. Such a~factor can be computed, e.g., by the modified Gram-Schmidt orthonormalization procedure on~$V$ presented in~\cite{AltPS21}. 

%
An alternative retraction can be defined by using the polar decomposition $\vbf=\ubf S$, where $\ubf\in\StiefelV$ and $S\in\mathbb{R}^{p\times p}$ is symmetric and positive definite. Assuming that the components of $\vbf$ are linearly independent, $S=\out{\vbf}{\vbf}^{1/2}$ and $\ubf=\vbf\,\out{\vbf}{\vbf}^{-1/2}$ are uniquely defined.
This leads to the {\em polar decomposition based retraction}
\[
	\calR^{\rm pol}(\phibf,\etabf) = (\phibf+\etabf)\out{\phibf+\etabf}{\phibf+\etabf}^{-1/2}, 
\]
which is of second order. Indeed, computing the second-order derivative of $\calR^{\rm pol}_\phibf (t\etabf)$
at $t=0$ and exploiting \eqref{eq:normalS}, we obtain that
\[
	\frac{{\rm d}^2}{{\rm d}t^2} \calR^{\rm pol}_\phibf (t\etabf)\Big|_{t=0} 
	= -\phibf\,\out{\etabf}{\etabf}\in T_{\phibf}^{\perp}\,\StiefelV.
\]
Note that second-order retractions are advantageous for second-order Riemannian optimization methods; see, e.g. \cite[Sect.~6.3]{AbsiMS08}. 
%
%
\subsection{The Grassmann manifold}\label{sec:Grassmann}
Let $\OrthGr$ be the orthogonal group of $\R^{p\times p}$. 
Following \cite{SchRNB09}, we define the infinite-dimensional {\em Grassmann manifold} as the quotient 
\[
	\GrassV 
	= \StiefelV/\OrthGr  
\] 
of the Stiefel manifold $\StiefelV$ with respect to the equivalence relation 
\[
	\phibf\sim\hat{\phibf} \qquad 
	\Longleftrightarrow\qquad \hat{\phibf} = \phibf\, Q \text{ for some } Q\in \OrthGr.
\]
The Grassmann manifold $\GrassV$ can be interpreted as the set of the equivalence classes given by 
\[
\equivcl{\phibf} 
= \big\{ \hat{\phibf}\in\StiefelV\enskip :\enskip \hat{\phibf}=\phibf\, Q, \; Q\in\OrthGr \big\}
\] 
for $\phibf\in\StiefelV$. Similarly to the Grassmann matrix manifold \cite[Prop.~3.4.6]{AbsiMS08}, one can show that $\GrassV$ admits a~unique structure of quotient manifold. A {\em canonical projection} from the Stiefel manifold into the Grassmann manifold is defined by
\[
\arraycolsep=2pt
\begin{array}{rcl}
	\pi\colon\StiefelV & \to & \GrassV\\
	\phibf &\mapsto & \equivcl{\phibf}
\end{array}
\] 
and is a smooth submersion. This means that $\Drm\! \pi(\phibf)$ is surjective, and,  hence, the equivalence class $\pi^{-1}(\equivcl{\phibf})$  is an~embedded submanifold of $\StiefelV$; see \cite[Prop.~3.4.4.]{AbsiMS08}. 

In the following, we examine a useful connection of the Stiefel manifold and the Grassmann manifold. More precisely, we show that there is a one-to-one relation between the tangent space of the Grassmann manifold and the so-called horizontal space, a subspace of the tangent space of the Stiefel manifold. The tangent space $T_\phibf \, \StiefelV$ at $\phibf\in \StiefelV$ defined in~\eqref{eq:TangSpaceSt} can be splitted with respect to the projection $\pi$ and the Hilbert metric $g$ as $T_\phibf \, \StiefelV = \calV_{\phibf} \oplus \calH_\phibf$, where 
\begin{equation}\label{eq:vertV}
	\calV_{\phibf} 
	= T_\phibf \,\pi^{-1}(\equivcl{\phibf}) 
	= \big\{\phibf\, \varTheta\enskip :\enskip \varTheta\in \Sskew \big\}
\end{equation}
is the {\em vertical space} at $\phibf$ and 
\begin{align*}
	\calH_{\phibf} 
	= \calV_\phibf^\perp 
	&= \big\{ \xbf \in T_\phibf\, \StiefelV\enskip :\enskip g(\xbf,\vbf)=0 \text{ for all } \vbf\in \calV_\phibf\big\} \\
	&=\big\{ \xbf \in T_\phibf\, \StiefelV\enskip :\enskip \out{\phibf}{\xbf}=0_p\big\}
\end{align*}
is the {\em horizontal space} at $\phibf$; see \cite[Lem.~2]{SchRNB09}. The orthogonal projection of a tangent vector $\etabf\in T_\phibf \StiefelV$ onto $\calH_{\phibf}$ is given by 
\begin{align}
\label{eq:def:horizontalProj}
	\calP_{\phibf}^{\rm h}(\etabf) 
	= \etabf -\phibf\, \out{\phibf}{\etabf}.
\end{align}
One can see that, moving on a~curve in the Stiefel manifold $\StiefelV$ with direction in the vertical space $\calV_{\phibf}$, we stay in the equivalence class $\equivcl{\phibf}$. The tangent space $T_{\equivcl{\phibf}}\GrassV$ of the Grassmann manifold $\GrassV$ can then be identified with the horizontal space $\calH_{\phibf}$ in the sense that for any $\psibf\in T_{\equivcl{\phibf}}\GrassV$ there exists a~unique $\psibf_{\phibf}^{\rm h}\in \calH_{\phibf}$ such that \mbox{$\Drm\!\pi(\phibf)[\psibf_{\phibf}^{\rm h}]=\psibf$}. The unique element  $\psibf_{\phibf}^{\rm h}$ is called the {\em horizontal lift} of $\psibf$ at $\phibf$. This relation allows us to introduce a~metric on the Grassmann manifold $\GrassV$, namely 
\[
	g^{\rm Gr}(\psibf,\zetabf) 
	= g(\psibf^{\rm h},\zetabf^{\rm h}), \qquad 
	\psibf,\zetabf\in T_{\equivcl{\phibf}}\GrassV,\; \equivcl{\phibf}\in\GrassV,
\]
where $\psibf_{\phibf}^{\rm h}, \zetabf_{\phibf}^{\rm h}\in \calH_{\phibf}$ are the horizontal lifts of $\psibf$ and $\zetabf$ at $\phibf$, respectively. Due to $\psibf_{\phibf Q}^{\rm h}=\psibf_{\phibf}^{\rm h}Q$ for all $Q\in\OrthGr$, one can show that this metric does not depend on the choice of the representative~$\phibf$ of the equivalence class~$\equivcl{\phibf}$. 

The connection of $T_{\equivcl{\phibf}}\GrassV$ and $\calH_{\phibf}$ makes it possible to introduce optimization methods on the Grassmann manifold, while still working on the tangent space of the corresponding Stiefel manifold. 
Using the canonical projection $\pi$, the  minimization problem~\eqref{eq:minSt} on the Stiefel manifold $\StiefelV$ can be written as the minimization problem 
\begin{equation} 
	\label{eq:minGrV}
	\min_{\equivcl{\phibf}\in\GrassV} {\calF}(\equivcl{\phibf})
\end{equation}
on the Grassmann manifold $\GrassV$, where the cost functional $\calF\colon \GrassV\to \R$ is induced by $\calE$ as $\calE(\phibf)={\calF}(\pi(\phibf))$ and $\calF(\equivcl{\phibf})={\calE}(\pi^{-1}(\equivcl{\phibf}))$. Note that this definition is justified by the fact that $\calE(\phibf)=\calE(\phibf Q)$ for all~$Q\in\OrthGr$. The horizontal lift of the Riemannian gradient $\grad {\calF}(\equivcl{\phibf})\in T_{\equivcl{\phibf}}\GrassV$ with respect to the metric $g^{\rm Gr}$ is given by 
\begin{equation}\label{eq:gradFGr}
\grad {\calF}(\equivcl{\phibf})_{\phibf}^{\rm h} 
= \grad{\calE}(\phibf) 
= \calP_{\phibf}^{\rm h} \big(\calJ\!\calA_\phibf\,\phibf \big)
= \calJ\!\calA_\phibf\,\phibf - \phibf\, \out{\phibf}{\calJ\!\calA_\phibf\,\phibf}.
\end{equation}
To obtain the horizontal lift of the Riemannian Hessian $\Hess {\calF}(\equivcl{\phibf})[\psibf]$, we proceed as before but replace the projection~$\calP_{\phibf}$ by the orthogonal projection $\calP_{\phibf}^{\rm h}$ onto the horizontal space; see equation~\eqref{eq:def:horizontalProj}. This leads to 
\begin{align}
	(\Hess{\calF}(\equivcl{\phibf})[\psibf])_{\phibf}^{\rm h} 
	&= \calP_{\phibf}^{\rm h} \big(\Drm\grad \calE(\phibf)[\psibf_{\phibf}^{\rm h}] \big) \notag \\
	&= \calP_{\phibf}^{\rm h} \big(\calJ\!\calA_\phibf\, \psibf_{\phibf}^{\rm h}+ \calJ\calB_\phibf\,\psibf_{\phibf}^{\rm h}-\psibf_{\phibf}^{\rm h}\out{\phibf}{\calJ\!\calA_\phibf\,\phibf} \big).
	\label{eq:HessGrassmann}
\end{align}

Retractions on the Grassmann manifold are inherited from that on the Stiefel manifold applied to the horizontal lift; see \cite[Prop.~4.1.3]{AbsiMS08}. For all \mbox{$\equivcl{\phibf}\in\GrassV$} and $\psibf\in T_{\equivcl{\phibf}}\GrassV$, we have
\begin{align*}
	\calR^{{\rm Gr,pol}}(\equivcl{\phibf}, \psibf) & = \pi\big(\calR^{\rm pol}(\phibf+\psibf_{\phibf}^{\rm h})\big), \\
	\calR^{{\rm Gr,qR}}(\equivcl{\phibf}, \psibf) & = \pi\big(\calR^{\rm qR}(\phibf+\psibf_{\phibf}^{\rm h})\big).
\end{align*}
Note that these retractions are independent of the chosen point $\phibf$, providing the same equivalence class on $\GrassV$.

Similar to the matrix case \cite{AbsiMS08,EdeAS98}, we can also derive an explicit expression for the \emph{Grassmann exponential} ${\rm Exp}\colon T\GrassV\to \GrassV$, which maps \mbox{$(\equivcl{\phibf},\psibf)\in  T\GrassV$} to the end point of the unique geodesic starting at $\equivcl{\phibf}$ and going in the direction~$\psibf$. Let $\psibf_{\phibf}^{\rm h}=\ubf\Sigma W^T$ be a singular value decomposition of the horizontal lift $\psibf_{\phibf}^{\rm h}$ of~$\psibf$, where $\ubf\in\StiefelV$, $W\in\OrthGr$, and $\Sigma\in\R^{p\times p}$ is diagonal with nonnegative diagonal elements. Then the Grassmann exponential is given by 
\[
{\rm Exp}(\equivcl{\phibf}, \psibf) = \pi \big(\phibf \,W\cos \Sigma +\ubf \sin\Sigma\big).
\]
Using the property $\psibf_{\phibf}^{\rm h}\in\calH_\phibf$, one can verify that $\phibf \,W\cos \Sigma +\ubf \sin\Sigma \in\StiefelV$. Therefore, it can be considered as a representative of the resulting equivalence class.
\begin{remark}
	For $p=1$, the Stiefel manifold coincides with the Grassmann manifold and equals the unit sphere
	\[
	\mathbb{S}= \big\{ \phi\in \tilde{V}\enskip :\enskip \|\phi\|_{L^2(\Omega)} = 1 \big\}.
	\]
	Its tangent space is given by $T_{\phi}\,\mathbb{S}	= \{ \eta\in \tilde{V} : (\eta, \phi)_{L^2(\Omega)} = 0 \}$ and the orthogonal projection onto this space takes the form $\calP_{\phi}(y)=y-(\phi,y)_{L^2(\Omega)}\phi$ for $y\in \tilde{V}$. 
	Furthermore, for $(\phi,\eta)\in T\mathbb{S}$, the second-order retraction and the exponential mapping on $\mathbb{S}$ are given by  
	\begin{align*}
	\calR(\phi,\eta) =\frac{\phi+\eta\qquad}{\|\phi+\eta\|_{L^2(\Omega)}}, \qquad 
	{\rm Exp}(\phi,\eta)=\cos(\|\eta\|_{L^2(\Omega)})\phi+\sin(\|\eta\|_{L^2(\Omega)})\frac{\eta\qquad}{\|\eta\|_{L^2(\Omega)}},
\end{align*}
respectively.
\end{remark}
%
%
\section{Riemannian Newton Methods}\label{sect:Newton}
In this section, we present Riemannian Newton methods on the Stiefel as well as on the Grassmann manifold and discuss the inexact version of the latter.

Within the Riemannian Newton method on the Stiefel manifold $\StiefelV$, for given $\phibf_k\in \StiefelV$, we first compute the Newton search direction~$\etabf_k\in T_{\phibf_k} \StiefelV$ by solving the Newton equation 
\begin{equation}\label{eq:NewtonEqSt}
	\Hess\calE(\phibf_k) [\etabf_k]
	= -\grad \calE(\phibf_k).
\end{equation}
The iterate is then updated by~$\phibf_{k+1}=\calR(\phibf_k,\etabf_k)$ for any retraction on $\StiefelV$. 
It should, however, be noted that due to the non-uniqueness of the minimizer of \eqref{eq:minSt} caused by the invariance of $\calE$ under orthogonal transformations, we cannot expect that $\Hess \calE(\phibf_k)$ is invertible on $T_{\phibf_k} \StiefelV$, which implies that the solution of~\eqref{eq:NewtonEqSt} is non-unique. This is, indeed, vindicated by the following theorem.
\begin{theorem}\label{th:HessStiefel}
Let $\phibf\in\StiefelV$ and let $\calV_{\phibf}$ be the vertical space  given in \eqref{eq:vertV}. Then the Riemannian Hessian of the energy functional~$\calE$ from~\eqref{eq:energy} in~$\phibf$ is non-invertible on~$\calV_{\phibf}$, i.e., $\big(\xibf,\Hess\calE(\phibf) [\etabf]\big)_H=0$ for all $\etabf,\xibf\in\calV_{\phibf}$. 
\end{theorem}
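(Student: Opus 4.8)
The plan is to exploit the $\OrthGr$-invariance of $\calE$, which translates into an equivariance of the Riemannian gradient, together with the horizontality of $\grad\calE(\phibf)$. The point of departure is the expression \eqref{eq:HessSt1}, namely $\Hess\calE(\phibf)[\etabf] = \calP_{\phibf}(\Drm\grad\calE(\phibf)[\etabf])$. Since $\calP_{\phibf}$ is the orthogonal projection onto $T_\phibf\StiefelV$ with respect to $(\cdot,\cdot)_H$ and every $\xibf\in\calV_\phibf$ already lies in $T_\phibf\StiefelV$, self-adjointness of $\calP_{\phibf}$ and $\calP_{\phibf}(\xibf)=\xibf$ reduce the assertion to
\[
	\big(\xibf,\Hess\calE(\phibf)[\etabf]\big)_H = \big(\xibf,\Drm\grad\calE(\phibf)[\etabf]\big)_H .
\]
It therefore suffices to control the Fr\'echet derivative of the gradient in a vertical direction $\etabf=\phibf\varTheta_1$ with $\varTheta_1\in\Sskew$.

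First I would establish the equivariance $\grad\calE(\phibf Q)=\grad\calE(\phibf)\,Q$ for all $Q\in\OrthGr$. This rests on the invariance of the density, $\rho(\phibf Q)=\phibf Q\cdot\phibf Q=\phibf\cdot\phibf=\rho(\phibf)$, so that $a_\phibf$, and hence $\calA_\phibf$, depend on $\phibf$ only through $\rho(\phibf)$ and satisfy $\calA_{\phibf Q}=\calA_\phibf$. Because $\calA_\phibf$, $\calI$ and $\calJ$ all act componentwise, one has $\calJ\calA_{\phibf Q}(\phibf Q)=(\calJ\calA_\phibf\,\phibf)Q$ and, using the identity $\out{\vbf A}{\wbf B}=A^T\out{\vbf}{\wbf}B$, also $\out{\phibf Q}{(\calJ\calA_\phibf\,\phibf)Q}=Q^T\out{\phibf}{\calJ\calA_\phibf\,\phibf}\,Q$; inserting these into the gradient formula \eqref{eq:gradFGr} and cancelling $QQ^T=I_p$ yields the claimed equivariance.

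Next I would differentiate this identity along the curve $\cbf(t)=\phibf\exp(t\varTheta_1)$, which stays in $\StiefelV$ and satisfies $\cbf(0)=\phibf$ and $\tfrac{{\rm d}}{{\rm d}t}\cbf(0)=\phibf\varTheta_1=\etabf$. Since all of its points lie on the manifold, equivariance gives $\grad\calE(\cbf(t))=\grad\calE(\phibf)\exp(t\varTheta_1)$, and differentiating at $t=0$ produces $\Drm\grad\calE(\phibf)[\etabf]=\grad\calE(\phibf)\,\varTheta_1$. Writing $\xibf=\phibf\varTheta_2$ with $\varTheta_2\in\Sskew$ and using the outer-product identity once more, the remaining pairing becomes
\[
	\big(\xibf,\Drm\grad\calE(\phibf)[\etabf]\big)_H
	=\trace\!\big(\varTheta_2^T\,\out{\phibf}{\grad\calE(\phibf)}\,\varTheta_1\big).
\]
The horizontality of the gradient, $\out{\phibf}{\grad\calE(\phibf)}=\out{\phibf}{\calJ\calA_\phibf\,\phibf}-\out{\phibf}{\phibf}\out{\phibf}{\calJ\calA_\phibf\,\phibf}=0_p$ (here $\out{\phibf}{\phibf}=I_p$), forces this trace to vanish, which is precisely the asserted identity.

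The computation is light, and I expect the only delicate point to be the equivariance step: one must verify carefully that the componentwise action of $\calI$, $\calJ$ and $\calA_\phibf$ commutes with right multiplication by $Q$ and that the correction term $\phibf\,\out{\phibf}{\calJ\calA_\phibf\,\phibf}$ transforms by the congruence $Q^T(\cdot)Q$. A more pedestrian alternative would insert $\etabf=\phibf\varTheta_1$ directly into the explicit Hessian \eqref{eq:defHess}: there the $\calA_\phibf$-terms cancel thanks to the symmetry of $\out{\phibf}{\calJ\calA_\phibf\,\phibf}$ (Remark \ref{rem:symmetryBracket}) and the skew-symmetry of $\varTheta_1$, leaving only the skew-symmetric part of $\out{\phibf}{\calJ\calB_\phibf\,\etabf}$, which one would then have to show is symmetric from the explicit form \eqref{eq:Bphi} of $\calB_\phibf$. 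I would favour the equivariance argument, as it bypasses this structural check.
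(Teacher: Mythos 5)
Your argument is correct, but it takes a genuinely different route from the paper. The paper proves the statement by brute force: it substitutes $\etabf=\phibf\,\varTheta_{\etabf}$ with $\varTheta_{\etabf}\in\Sskew$ into the explicit Hessian formula \eqref{eq:defHess}, cancels all $\calA_\phibf$-contributions using the symmetry of $\out{\phibf}{\calJ\!\calA_\phibf\,\phibf}$ from Remark~\ref{rem:symmetryBracket}, and then disposes of the remaining $\calB_\phibf$-terms by observing that $\phibf\cdot(\phibf\,\varTheta_{\etabf})=0$ for skew-symmetric $\varTheta_{\etabf}$, so that $\calB_\phibf$ annihilates vertical directions outright (slightly stronger than the symmetry check you anticipated for your ``pedestrian alternative''). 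Your main argument instead derives the equivariance $\grad\calE(\phibf Q)=\grad\calE(\phibf)\,Q$ from the invariance $\rho(\phibf Q)=\rho(\phibf)$, differentiates it along the orbit curve $\phibf\exp(t\varTheta_1)\subset\StiefelV$ to get $\Drm\grad\calE(\phibf)[\etabf]=\grad\calE(\phibf)\,\varTheta_1$, and concludes via \eqref{eq:HessSt1}, self-adjointness of $\calP_{\phibf}$, and the horizontality $\out{\phibf}{\grad\calE(\phibf)}=0_p$. All steps check out: the identity $\out{\vbf A}{\wbf B}=A^T\out{\vbf}{\wbf}B$ is valid, $\calA_{\phibf Q}=\calA_\phibf$ holds since $a_\phibf$ depends on $\phibf$ only through $\rho(\phibf)$, the componentwise operators commute with right multiplication by $Q$, and the use of a curve lying in $\StiefelV$ makes the directional derivative independent of the choice of extension of $\grad\calE$. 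What your approach buys is conceptual clarity and generality: it never touches $\calB_\phibf$ or the explicit Hessian \eqref{eq:defHess}, works verbatim for any $\OrthGr$-invariant smooth energy on $\StiefelV$, and isolates the two genuinely relevant facts (gradient equivariance and horizontality). What the paper's computation buys is the structural by-product that $\calB_\phibf$ vanishes on the vertical space -- which is really the same underlying fact as the invariance of $\rho$ along the orbit, just packaged at the level of the second derivative -- and a proof that stays entirely within the formulas already displayed.
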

\begin{proof}
Let $\etabf,\xibf\in\calV_{\phibf}$ be arbitrary. Then there exist $\varTheta_{\etabf},\varTheta_{\xibf}\in\Sskew$ such that \mbox{$\etabf=\phibf\,\varTheta_{\etabf}$} and $\xibf=\phibf\,\varTheta_{\xibf}$. Using the definition of $\Hess\calE(\phibf)$ in~\eqref{eq:defHess} and the symmetry of the matrix~$\out{\phibf}{\calJ\!\calA_\phibf\, \phibf}$ shown in Remark~\ref{rem:symmetryBracket}, we have
\begin{align*}
	\Hess\calE(\phibf) [\etabf] 
	&= \calJ\!\calA_\phibf\, \phibf\,\varTheta_{\etabf} 
	+ \calJ\calB_\phibf\,\phibf\,\varTheta_{\etabf} 
	- \phibf\,\varTheta_{\etabf}\,  \out{\phibf}{\calJ\!\calA_\phibf\,\phibf} 
	- \phibf\sym\big( \out{\phibf}{\calJ\!\calA_\phibf\, \phibf} \varTheta_{\etabf}\big)\\
	&\qquad -\phibf\sym \big(\out{\phibf}{\calJ\calB_\phibf\, \phibf}\varTheta_{\etabf}\big)
	+ \phibf\sym \big(\varTheta_{\etabf}\out{\phibf}{\calJ\!\calA_\phibf\, \phibf}\big)\\
	& = \calJ\!\calA_\phibf\, \phibf\,\varTheta_{\etabf} 
	-\phibf\,  \out{\phibf}{\calJ\!\calA_\phibf\,\phibf} \,\varTheta_{\etabf} +\calJ\calB_\phibf\,\phibf\,\varTheta_{\etabf}
	-\phibf\sym \big(\out{\phibf}{\calJ\calB_\phibf\, \phibf}\varTheta_{\etabf}\big) 
\end{align*}
and, hence,
\begin{align*}
	\big(\xibf,\Hess\calE(\phibf) [\etabf]\big)_H  
	&=\trace\left(\varTheta_{\xibf}^T\out{\phibf}{\calJ\calB_\phibf\,\phibf} \,\varTheta_{\etabf}^{}-\Theta_{\xibf}^T\sym \big(\out{\phibf}{\calJ\calB_\phibf\, \phibf}\varTheta_{\etabf}^{}\big)\right).
\end{align*}
We now show that $\out{\phibf}{\calJ\calB_\phibf\,\phibf} \,\varTheta_{\etabf}= 0$. With the skew-symmetric matrix $\varTheta_{\etabf} = [\theta_{ij}]_{i,j=1}^p$, we first observe that 
\[
\phibf\cdot(\phibf\,\varTheta_{\etabf}) 
= \sum_{j=1}^p \phi_j \sum_{i=1}^p \phi_i\theta_{ij}
= \sum_{i=1}^p \phi_i \sum_{j=1}^p \phi_j\theta_{ij}
= -\sum_{i=1}^p \phi_i \sum_{j=1}^p \phi_j\theta_{ji}
= -\phibf\cdot(\phibf\,\varTheta_{\etabf}). 
\]
This implies $\phibf\cdot(\phibf\,\varTheta_{\etabf}) = 0$ and, hence, 
$\out{\phibf}{\calJ\calB_\phibf\,\phibf} \,\varTheta_{\etabf}= 0$. As a result, we conclude that $\big(\xibf,\Hess\calE(\phibf) [\etabf]\big)_H=0$ for all $\etabf,\xibf\in\calV_{\phibf}$. 
\end{proof} 
It follows from Theorem~\ref{th:HessStiefel} that if equation \eqref{eq:NewtonEqSt} is solvable, its solution is non-unique. To overcome this difficulty, we pass
on to the Grassmann manifold $\GrassV$. Given $\equivcl{\phibf_k}\in\GrassV$, the Newton direction
$\psibf_k\in T_{\equivcl{\phibf_k}}\GrassV$ is computed by solving the Newton equation
\[
	\Hess\calF(\equivcl{\phibf_k}) [\psibf_k]
= -\grad \calF(\equivcl{\phibf_k}).
\]
By applying the horizontal lift expressions \eqref{eq:gradFGr} and \eqref{eq:HessGrassmann}, this equation leads to 
 \begin{equation}
\label{eq:NewGrV}
	\calP_{\phibf_k}^{\rm h} \big(\Drm\grad \calE(\phibf_k)[(\psibf_k)_{\phibf_k}^{\rm h}] \big) 
	= -\grad \calE(\phibf_k)
\end{equation}
with unknown $(\psibf_k)_{\phibf_k}^{\rm h}\in\calH_{\phibf_k}$ being the horizontal lift of~$\psibf_k$ at~$\phibf_k$. Note that this equation is well-defined, since~$-\grad \calE(\phibf_k)$ is an element of the horizontal space $\calH_{\phibf_k}$. 

For solving the Newton equation \eqref{eq:NewGrV}, we can employ any matrix-free iterative linear solver which does not require the storage of the coefficient matrix explicitly but accesses it by computing the matrix-vector product or -- as in our case --  by evaluating the linear operator given in~\eqref{eq:HessGrassmann}. The resulting Riemannian Newton method on the Grassmann manifold is presented
in Algorithm~\ref{alg:infdim:RiemNewtonGr}.
\begin{algorithm}[h]
	\setstretch{1.15}
	\caption{Riemannian Newton method on the Grassmann manifold} 
	\label{alg:infdim:RiemNewtonGr}
	\begin{algorithmic}[1]
		\State {\bf Input}: initial guess $\phibf_0\in\StiefelV$, parameters $\delta,\eta\in(0,1)$, $\sigma\in(0,1/2]$, $\ell_{\max}\in\N$
		\For{$k=0,1,2\ldots$} 
		\State{Solve the Newton equation
			\[
			\calP_{\phibf_k}^{\rm h}\Bigl(\Drm\grad \calE(\phibf_k)[(\psibf_k)_{\phibf_k}^{\rm h}]\Bigr) 
			= -\grad \calE(\phibf_k)
			\]
			\hspace*{5mm} for $(\psibf_k)_{\phibf_k}^{\rm h} \in \calH_{\phibf_k}$ in an inexact manner by using 
			an~iterative solver. If the condition   
			\begin{align*}				
				-\big( \grad \calE(\phibf_k), (\psibf_k)_{\phibf_k}^{\rm h}\big) _H
				&\geq \eta\, \|(\psibf_k)_{\phibf_k}^{\rm h}\|_H^2
			\end{align*} 
			\hspace*{4.4mm} 
		cannot be attained within~$\ell_{\max}$	steps, then set $$(\psibf_k)_{\phibf_k}^{\rm h} = -\grad \calE(\phibf_k).$$
		}
		\State{Find the smallest $\ell\in\N_0$ such that the Armijo condition 
			\begin{align*}
				\calE\big(\calR({\phibf_k},\delta^\ell (\psibf_k)_{\phibf_k}^{\rm h})\big) - \calE\big(\phibf_k\big) 
				\le \sigma \,\delta^\ell
				\big( \grad \calE(\phibf_k), (\psibf_k)_{\phibf_k}^{\rm h}\big)_H
			\end{align*}
			\hspace*{5mm} is satisfied, where $\calR({\phibf_k},\delta^\ell (\psibf_k)_{\phibf_k}^{\rm h})$
			is a retraction on $\StiefelV$.}
		\State{Set $\phibf_{k+1} = \calR\big(\phibf_k,\delta^\ell(\psibf_k)_{\phibf_k}^{\rm h}\big)$.}
		\EndFor
		\State {\bf Output}: sequence of iterates $\{\phibf_k\}$ with $\phibf_k\in\StiefelV$
	\end{algorithmic}
\end{algorithm} 

The Newton equation~\eqref{eq:NewGrV} can also be formulated as a~saddle point problem. To this end, we introduce the~bilinear form 
\[
	\widehat{a}_\phibf(\psibf,\wbf) 
	= \langle \calA_\phibf\,\psibf,\wbf\rangle + \langle \calB_\phibf\, \psibf,\wbf\rangle - (\psibf\, \out{\phibf}{\calJ\!\calA_\phibf\,\phibf},\wbf)_H.  
\]
Then, the equivalent problem to~\eqref{eq:NewGrV} reads: find $(\psibf_k)_{\phibf_k}^{\rm h}\in V$ and a~Lagrange multiplier \mbox{$M_k\in \R^{p\times p}$} such that 
\begin{subequations}
	\label{eq:Saddle2}
	\begin{align}
		\widehat{a}_{\phibf_k}\big((\psibf_k)_{\phibf_k}^{\rm h},\wbf\big)  + \trace \big( M_k^T\out{\phibf_k}{\wbf}\big) & = -a_{\phibf_k}(\phibf_k,\wbf) & & \text{for all } \wbf\in V, \label{eq:Saddle2_1}\\
		\out{\phibf_k}{(\psibf_k)_{\phibf_k}^{\rm h}} \hspace*{32mm}& = 0_p. && \label{eq:Saddle2_2}
	\end{align}
\end{subequations}
The constraint \eqref{eq:Saddle2_2} implies that  
$(\psibf_k)_{\phibf_k}^{\rm h}\in\calH_{\phibf_k}$. Further note that any function $\wbf\in \calH_{\phibf_k}$ satisfies~$\out{\phibf_k}{\wbf}= 0_p$. Hence, 
for all $\wbf\in \calH_{\phibf_k}$, equation~\eqref{eq:Saddle2_1} reads  
$$
	\widehat{a}_\phibf\big((\psibf_k)_{\phibf_k}^{\rm h},\wbf\big) 
	= -a_{\phibf_k}(\phibf_k,\wbf)=-(\grad\calE(\phibf_k),\wbf)_H,
$$ 
which is equivalent to the Newton equation \eqref{eq:NewGrV}.  

One important property guaranteeing an~isolated local minimum of the energy is that the Hessian is positive at a~stationary point. For a global minimizer of~\eqref{eq:minSt}, denoted by~$\phibf_*$, we consider the following {\em linear} eigenvalue problem: seek $\phi \in \tilde{V}$ and $\lambda\in \R$ such that 
\begin{align}
	\label{eq:NLEVPweakComponents:fixedPhi}	
	\tilde a_{\phibf_*}\!(\phi, v) 
	= \lambda\, (\phi, v)_{L^2(\Omega)}
	\qquad\text{ for all } v\in \tilde V.
\end{align}
Then, due to~\eqref{eq:NLEVPweakComponents}, we know that the components of~$\phibf_* = (\phi_{*,1}, \dots, \phi_{*,p})$ satisfy~\eqref{eq:NLEVPweakComponents:fixedPhi} together with the smallest~$p$ eigenvalues denoted by $0 < \lambda_1 \le \dots \le \lambda_p$. In the following, we will assume that these eigenfunctions can be extended to a basis of~$\tilde{V}$. 
\begin{assumption}[Basis and spectral gap]
	\label{ass:gap}
	The eigenfunctions~$\phi_{*,1}, \phi_{*,2}, \ldots \in \tilde{V}$ of the eigenvalue problem~\eqref{eq:NLEVPweakComponents:fixedPhi} form an $L^2$-orthonormal basis of $\tilde{V}$. The corresponding eigenvalues~$\lambda_1 \le \lambda_2 \le \dots$ are ordered by size with a spectral gap~$\lambda_p < \lambda_{p+1}$.
\end{assumption}
\begin{theorem}[Positive Hessian]
	\label{th:positiveHessian}	
	Let~$\phibf_*$ be a global minimal solution of the NLEVP~\eqref{eq:NLEVPop} and let the corresponding eigenvalue problem~\eqref{eq:NLEVPweakComponents:fixedPhi} satisfy Assumption~\textup{\ref{ass:gap}}. Further assume that the nonlinearity fulfills~$(\calJ \calB_{\phibf_*} \psibf_{\phibf_*}^{\rm h}, \psibf_{\phibf_*}^{\rm h})_H \ge 0$ for all $\psibf_{\phibf_*}^{\rm h}\in\calH_{\phibf_*}$. Then the Riemannian Hessian of~$\calF$ at $\equivcl{\phibf_*}$ is positive, i.e., 
	\[
	g^{\rm Gr}\big( \Hess{\calF}(\equivcl{\phibf_*})[\psibf], \psibf \big)
	> 0
	\]
	for all nonzero~$\psibf\in T_{\equivcl{\phibf_*}}\GrassV$. 
\end{theorem}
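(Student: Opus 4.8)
The plan is to evaluate the Grassmann metric pairing directly through the horizontal lift and then reduce everything to a scalar eigenvalue estimate controlled by the spectral gap. First I would use the definition of $g^{\rm Gr}$ together with the horizontal-lift formula \eqref{eq:HessGrassmann}, writing $\psibf^{\rm h}:=\psibf_{\phibf_*}^{\rm h}\in\calH_{\phibf_*}$ and noting that $g^{\rm Gr}(\Hess\calF(\equivcl{\phibf_*})[\psibf],\psibf)=\big((\Hess\calF(\equivcl{\phibf_*})[\psibf])_{\phibf_*}^{\rm h},\psibf^{\rm h}\big)_H$. Since $\calP_{\phibf_*}^{\rm h}$ from \eqref{eq:def:horizontalProj} is the orthogonal projection onto $\calH_{\phibf_*}$ (self-adjoint with respect to $(\,\cdot\,,\cdot\,)_H$) and $\psibf^{\rm h}\in\calH_{\phibf_*}$, the projection may be discarded when paired against $\psibf^{\rm h}$, leaving
\begin{equation*}
	g^{\rm Gr}(\Hess\calF(\equivcl{\phibf_*})[\psibf],\psibf)
	=\big(\calJ\!\calA_{\phibf_*}\psibf^{\rm h},\psibf^{\rm h}\big)_H
	+\big(\calJ\calB_{\phibf_*}\psibf^{\rm h},\psibf^{\rm h}\big)_H
	-\big(\psibf^{\rm h}\,\out{\phibf_*}{\calJ\!\calA_{\phibf_*}\phibf_*},\psibf^{\rm h}\big)_H.
\end{equation*}
The middle term is nonnegative by the standing nonlinearity assumption, so it suffices to show that the sum of the first and third terms is strictly positive for $\psibf\ne 0$.

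Next I would normalize the representative. Because all Grassmann quantities are independent of the chosen representative of $\equivcl{\phibf_*}$, and because $\out{\phibf_*}{\calJ\!\calA_{\phibf_*}\phibf_*}=\Lambda_*$ is symmetric by \eqref{eq:Lambda} and Remark~\ref{rem:symmetryBracket}, I replace $\phibf_*$ by $\phibf_* Q$ with an orthogonal $Q$ diagonalizing $\Lambda_*$. The invariance of the NLEVP under $\phibf_*\mapsto\phibf_* Q$, $\Lambda_*\mapsto Q^T\Lambda_* Q$ then brings the minimizer into the form \eqref{eq:NLEVPweakComponents}, so that $\out{\phibf_*}{\calJ\!\calA_{\phibf_*}\phibf_*}=\Diag(\lambda_1,\dots,\lambda_p)=:\Lambda_*$, where $0<\lambda_1\le\dots\le\lambda_p$ are the $p$ smallest eigenvalues of \eqref{eq:NLEVPweakComponents:fixedPhi}. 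Using the Riesz property of $\calJ$ and the block structure \eqref{eq:aphi}, the relevant part collapses to a sum over the components,
\begin{equation*}
	\big(\calJ\!\calA_{\phibf_*}\psibf^{\rm h},\psibf^{\rm h}\big)_H
	-\big(\psibf^{\rm h}\Lambda_*,\psibf^{\rm h}\big)_H
	=\sum_{j=1}^p\Big(\tilde a_{\phibf_*}(\psi_j^{\rm h},\psi_j^{\rm h})-\lambda_j\,\|\psi_j^{\rm h}\|_{L^2(\Omega)}^2\Big).
\end{equation*}

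The heart of the argument is then a spectral estimate on each summand. By Assumption~\ref{ass:gap} the eigenfunctions $\{\phi_{*,m}\}_{m\ge1}$ of \eqref{eq:NLEVPweakComponents:fixedPhi} form an $L^2$-orthonormal basis of $\tilde V$, so I expand $\psi_j^{\rm h}=\sum_{m\ge1}c_{jm}\phi_{*,m}$. The horizontal condition $\out{\phibf_*}{\psibf^{\rm h}}=0_p$ is precisely $(\phi_{*,i},\psi_j^{\rm h})_{L^2(\Omega)}=0$ for $i=1,\dots,p$, hence the coefficients of the first $p$ modes vanish and only modes $m>p$ survive. Exploiting that the $\phi_{*,m}$ are $\tilde a_{\phibf_*}$-orthogonal with $\tilde a_{\phibf_*}(\phi_{*,m},\phi_{*,m})=\lambda_m$, each summand equals $\sum_{m>p}(\lambda_m-\lambda_j)c_{jm}^2$, and since $\lambda_m\ge\lambda_{p+1}$ while $\lambda_j\le\lambda_p$, the spectral gap yields $\sum_{m>p}(\lambda_m-\lambda_j)c_{jm}^2\ge(\lambda_{p+1}-\lambda_p)\|\psi_j^{\rm h}\|_{L^2(\Omega)}^2$. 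Summing over $j$ and adding back the nonnegative $\calB_{\phibf_*}$-term gives
\begin{equation*}
	g^{\rm Gr}(\Hess\calF(\equivcl{\phibf_*})[\psibf],\psibf)
	\ge(\lambda_{p+1}-\lambda_p)\,\|\psibf^{\rm h}\|_H^2>0
\end{equation*}
for every nonzero $\psibf$, which proves the claim.

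The hard part will be the passage to the eigenexpansion in the third step: while $\{\phi_{*,m}\}$ is only asserted to be an $L^2$-orthonormal basis, evaluating $\tilde a_{\phibf_*}(\psi_j^{\rm h},\psi_j^{\rm h})$ term by term requires that the expansion of $\psi_j^{\rm h}\in\tilde V$ converge in the energy norm and that the $\tilde a_{\phibf_*}$-orthogonality of the eigenfunctions be applied mode-wise. This is precisely where the boundedness and coercivity of $\tilde a_{\phibf_*}$ on $\tilde V\times\tilde V$ must be invoked carefully; the diagonalization of $\Lambda_*$ and the identity $\tilde a_{\phibf_*}(\phi_{*,m},\phi_{*,m'})=\lambda_m\delta_{mm'}$ are the secondary points deserving explicit justification.
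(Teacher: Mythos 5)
Your argument is correct and follows essentially the same route as the paper's proof: the same three-term split of the lifted Hessian, the nonnegativity of the $\calB_{\phibf_*}$ term by assumption, the expansion of the horizontal components in the eigenmodes of index $>p$, and the spectral gap estimate. The only (harmless) differences are that you discard the horizontal projection via its self-adjointness against $\psibf_{\phibf_*}^{\rm h}$ instead of computing each projected term explicitly, and you diagonalize $\Lambda_*$ by changing the representative $\phibf_*\mapsto\phibf_* Q$ rather than rotating $\psibf_{\phibf_*}^{\rm h}$ by an orthogonal matrix $U$ at the end.
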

\begin{proof}
	We extend the proof of~\cite[Th.~5.1]{ZhaoBJ15}, which considers the finite-dimensional case for the simplified Kohn--Sham problem, to the infinite-dimensional case in a more general setting. We know from~\eqref{eq:HessGrassmann} that the horizontal lift of the Riemannian Hessian of $\calF$ at $\phibf_*$ takes the form 
	\begin{align*}
		(\Hess{\calF}(\equivcl{\phibf_*})[\psibf])_{\phibf_*}^{\rm h} 
		&= \calP_{\phibf_*}^{\rm h} \big(\calJ\!\calA_{\phibf_*} \psibf_{\phibf_*}^{\rm h}\big) 
		+ \calP_{\phibf_*}^{\rm h}\big( \calJ\calB_{\phibf_*} \psibf_{\phibf_*}^{\rm h}\big) 
		-  \calP_{\phibf_*}^{\rm h}\big(\psibf_{\phibf_*}^{\rm h}\out{\phibf_*}{\calJ\!\calA_{\phibf_*} \phibf_*} \big) \\
		&= T_1(\psibf_{\phibf_*}^{\rm h}) 
		\hspace{1.27cm} + T_2(\psibf_{\phibf_*}^{\rm h}) 
		\hspace{1.31cm}+ T_3(\psibf_{\phibf_*}^{\rm h}). 
	\end{align*}
	For the first term, we make the following considerations. Due to the $H$-orthogonality, each component of $\psibf_{\phibf_*}^{\rm h}$ satisfies $(\phi_{*,l}, (\psibf_{\phibf_*}^{\rm h})_j)_{L^2(\Omega)} = 0$ for $l,j= 1,\dots, p$. Hence, the $j$th component of $\psibf_{\phibf_*}^{\rm h}$ takes the form $\psi_j = (\psibf_{\phibf_*}^{\rm h})_j = \sum_{l>p} \alpha_{lj} \phi_{*,l}$ for some coefficients~$\alpha_{lj}\in\R$ and~$\phi_{*,l}$ denoting the basis from Assumption~\ref{ass:gap}. As a consequence, we get 
	\begin{align*}
		\big( \out{\phibf_*}{\calJ\!\calA_{\phibf_*} \psibf_{\phibf_*}^{\rm h}} \big)_{ij}
		& = \big( \phi_{*,i}, (\calJ\!\calA_{\phibf_*} \psibf_{\phibf_*}^{\rm h})_j \big)_{L^2(\Omega)}
		= \tilde{a}_{\phibf_*}\!(\phi_{*,i}, \psi_j) \\
		& = \sum_{l>p} \alpha_{lj}\, \tilde{a}_{\phibf_*}\!(\phi_{*,i}, \phi_{*,l})
		= \sum_{l>p} \alpha_{lj}\, \lambda_i (\phi_{*,i}, \phi_{*,l})_{L^2(\Omega)}
		= 0
	\end{align*}
	for all $i,j=1,\ldots, p$ and, hence, 
	\[
	T_1(\psibf_{\phibf_*}^{\rm h})
	= \calP_{\phibf_*}^{\rm h} \big(\calJ\!\calA_{\phibf_*} \psibf_{\phibf_*}^{\rm h}  \big)  
	= \calJ\!\calA_{\phibf_*} \psibf_{\phibf_*}^{\rm h} - \phibf_* 
	\out{\phibf_*}{\calJ\!\calA_{\phibf_*} \psibf_{\phibf_*}^{\rm h}	}
	= \calJ\!\calA_{\phibf_*} \psibf_{\phibf_*}^{\rm h}. 
	\]
	For the second term, we get with the assumption on~$\calB_{\phibf_*}$ that 
	\begin{align*}
		g( T_2(\psibf_{\phibf_*}^{\rm h}), \psibf_{\phibf_*}^{\rm h} )
		&= \big(T_2(\psibf_{\phibf_*}^{\rm h}), \psibf_{\phibf_*}^{\rm h}\big)_H \\
		&= \big(\calJ \calB_{\phibf_*} \psibf_{\phibf_*}^{\rm h} - \phibf_* \out{\phibf_*}{\calJ \calB_{\phibf_*} \psibf_{\phibf_*}^{\rm h}},\psibf_{\phibf_*}^{\rm h}\big)_H  \\
		&= \big(\calJ \calB_{\phibf_*} \psibf_{\phibf_*}^{\rm h},\psibf_{\phibf_*}^{\rm h}\big)_H  - \out{\phibf_*}{\calJ \calB_{\phibf_*} \psibf_{\phibf_*}^{\rm h}}^T \big(\phibf_*,\psibf_{\phibf_*}^{\rm h}\big)_H\\
		&= \big(\calJ \calB_{\phibf_*} \psibf_{\phibf_*}^{\rm h},\psibf_{\phibf_*}^{\rm h}\big)_H 
		\ge 0.   
	\end{align*}
	Finally, by using \eqref{eq:Lambda}, the third term takes the form  
	\begin{align*}
		T_3(\psibf_{\phibf_*}^{\rm h})
		&= - \calP_{\phibf_*}^{\rm h} \big( \psibf_{\phibf_*}^{\rm h} \out{\phibf_*}{\calJ\!\calA_{\phibf_*}\phibf_*} \big) \\
		&= - \psibf_{\phibf_*}^{\rm h} \out{\phibf_*}{\calJ\!\calA_{\phibf_*}\phibf_*} 
		+ \phibf_* \out{\phibf_*}{\psibf_{\phibf_*}^{\rm h}}\, \out{\phibf_*}{\calJ\!\calA_{\phibf_*}\phibf_*} 
		= - \psibf_{\phibf_*}^{\rm h} \Lambda_*.
	\end{align*}
	Let the columns of $U\in\OrthGr$ form a~basis of eigenvectors corresponding to the eigenvalues $\lambda_1,\ldots,\lambda_p$ of $\Lambda_*$ and let $\psibf_{\phibf_*}^{\rm h}U=(\tilde{\psi}_1,\ldots,\tilde{\psi_p})$. Due to the assumed spectral gap, this yields all together  
	\begin{align*}
		g^{\rm Gr}\big( \Hess{\calF}(\equivcl{\phibf_*})[\psibf], \psibf \big) & = g\big( (\Hess{\calF}(\equivcl{\phibf_*})[\psibf])_{\phibf_*}^{\rm h}, \psibf_{\phibf_*}^{\rm h} \big) \\
		&= g\big( T_1(\psibf_{\phibf_*}^{\rm h}) + T_2(\psibf_{\phibf_*}^{\rm h}) + T_3(\psibf_{\phibf_*}^{\rm h}), \psibf_{\phibf_*}^{\rm h} \big) \\
		&\ge \trace\, \out{\calJ\!\calA_{\phibf_*} \psibf_{\phibf_*}^{\rm h}}{\psibf_{\phibf_*}^{\rm h}} 
		- \trace\, \out{\psibf_{\phibf_*}^{\rm h} \Lambda_*}{\psibf_{\phibf_*}^{\rm h}} 		\\
		&= \sum_{j=1}^p \tilde{a}_{\phibf_*}\!(\tilde{\psi}_j, \tilde{\psi}_j) 
		- \sum_{j=1}^p \lambda_j (\tilde{\psi}_j, \tilde{\psi}_j)_{L^2(\Omega)} \\
		&\ge \sum_{j=1}^p ( \lambda_{p+1} - \lambda_j)\, (\tilde{\psi}_j, \tilde{\psi}_j)_{L^2(\Omega)}
		> 0,
	\end{align*}
	which completes the proof.  
\end{proof}
\begin{remark}[Connection to the Lagrange--Newton method]
The optimal solution of the constrained minimization problem~\eqref{eq:minSt} can also be determined by the Lagrange--Newton method. Based on the first-order optimality conditions~\eqref{eq:NLEVPop} with a symmetric Lagrange multiplier, we aim to solve the nonlinear system of equations
\[
f(\phibf, \Lambda) = \begin{bmatrix} \calJ\!\calA_{\phibf} \,\phibf-\phibf\, \Lambda \\ 
			\out{\phibf}{\phibf} - I_p \\ \Lambda-\Lambda^T\end{bmatrix} 
			= \begin{bmatrix} \mathbf{0}\, \\ 0_p \\ 0_p\end{bmatrix}.
\]
Computing the Jacobian of $f$, the Lagrange-Newton iteration is given as follows: for given $\phibf_k\in V$ and $\Lambda_k\in\R^{p\times p}$, solve the equations
\begin{subequations}
	\label{eq:LagrNewton}
	\begin{align}
	\calJ\!\calA_{\phibf_k} \etabf_k + \calJ\calB_{\phibf_k} \etabf_k 
	-\etabf_k\Lambda_k-\phibf_k\Xi_k 
	&= -\big(\calJ\!\calA_{\phibf_k} \phibf_k -\phibf_k\Lambda_k\big), \label{eq:LagrNewton1} \\
	\out{\phibf_k}{\etabf_k}+\out{\etabf_k}{\phibf_k} 
	&= -\big(\out{\phibf_k}{\phibf_k} - I_p\big), \label{eq:LagrNewton2}\\
	\Xi_k^{}-\Xi_k^T 
	&= -\big(\Lambda_k^{}-\Lambda_k^T\big) \label{eq:LagrNewton3}
	\end{align}
\end{subequations}
for $\etabf_k\in V$, $\Xi_k\in\R^{p\times p}$ and update $\phibf_{k+1}=\phibf_k+\etabf_k$, $\Lambda_{k+1}=\Lambda_k+\Xi_k$. Note that $\phibf_{k+1}$ does not necessarily belong to $\StiefelV$. Assuming $\phibf_k\in\StiefelV$ and $\Lambda_k\in\Ssym$, however, equations \eqref{eq:LagrNewton2} and \eqref{eq:LagrNewton3} imply that $\etabf_k\in T_{\phibf_k}\StiefelV$ and $\Xi_k\in\Ssym$, respectively. Resolving equation \eqref{eq:LagrNewton1} for symmetric $\Xi_k$, we find that
\[
	\Xi_k = \sym\big(\out{\phibf_k}{\calJ\!\calA_{\phibf_k}\,\phibf_k}+\out{\phibf_k}{\calJ\calB_{\phibf_k}\,\phibf_k}-
	\out{\phibf_k}{\etabf_k}\Lambda_k\big)+\out{\phibf_k}{\calJ\!\calA_{\phibf_k}\,\phibf_k}-\Lambda_k.
\]
Inserting this matrix into \eqref{eq:LagrNewton1} yields the Newton equation~\eqref{eq:NewtonEqSt}. This shows that the Lagrange--Newton method with the modified update 
\[
	\phibf_{k+1} 
	= \calR(\phibf_k,\etabf_k), \qquad 
	\Lambda_{k+1} 
	= \out{\phibf_{k+1}}{\calJ\!\calA_{\phibf_{k+1}}\,\phibf_{k+1}}
\] 
is equivalent to the Newton method on the Stiefel manifold.
\end{remark}
%
%
\section{Examples and numerical experiments}\label{sect:numerics}
This section is devoted to the numerical investigation of the Riemannian Newton methods. To this end, we consider the Gross--Pitaevskii eigenvalue problem from Example~\ref{exp:GPEVP} and the Kohn--Sham model from Example~\ref{exp:KS}. 
%
%
\subsection{Gross--Pitaevskii eigenvalue problem}\label{sect:numerics:GPEVP}
The minimization of the Gross--Pitaevskii energy functional $\calE_{\rm GP}$ in~\eqref{eq:energyGP} leads to the following nonlinear eigenvector problem: find $\phi\in \tilde{V} = H^1_0(\Omega)$ with $\|\phi\|_{L^2(\Omega)} = 1$ and $\lambda\in\R$ such that 
\begin{equation}\label{eq:GPeig}
-\Delta \phi + 2\vartheta\, \phi + \kappa\, |\phi|^2 \phi 
= \lambda\, \phi 
\end{equation}
for some space-dependent external potential~$\vartheta \ge 0$ and an interaction constant $\kappa>0$. The latter means that the particle interactions are repulsive, i.e., we consider the so-called {\em defocussing regime}. In this case, we get the operators
\begin{subequations}
	\begin{align}
		\langle \calA_\phi\, v, w\rangle
		&= \int_\Omega (\nabla v)^T \nabla w \dx 
		+ 2 \int_\Omega  \vartheta\, v w \dx 
		+ \kappa\int_\Omega \, \phi^2 v w \dx, \label{eq:Aphi-GP} \\
		\langle \calB_\phi\, v, w\rangle 
		&= 2\kappa \int_{\Omega} \phi^2 v w \dx \label{eq:Bphi-GP} 
	\end{align}
\end{subequations}
for $v,w\in \tilde{V}$. One can see that the bilinear form defined through \eqref{eq:Aphi-GP} corresponds to the Laplacian with the $L^2$-shift~$2\vartheta+\kappa\phi^2$. Assuming this shift to be constant and $\Omega=(0,1)^d$ as the spatial domain, Assumption~\ref{ass:gap} is satisfied; see~\cite[Ch.~12]{Wlo87}. 
Moreover, the nonlinear operator from~\eqref{eq:Bphi-GP} fulfills  
\[
\big( \calJ \calB_{\phi}\, \psi, \psi \big)_{L^2(\Omega)}
= \big\langle \calB_{\phi}\,\psi, \psi \big\rangle 
= 2\kappa \int_{\Omega} \phi^2\, \psi^2 \dx 
\ge 0,
\]	
such that Theorem~\ref{th:positiveHessian} is applicable.

For the spatial discretization of the Gross--Pitaevskii problem \eqref{eq:GPeig}, we use a~biquadratic finite element method on a~Cartesian mesh of width $h$; see~\cite{CanCM10} for the corresponding error analysis. The resulting discrete eigenvalue problem reads 
\[
A \varphi + 2M_{\vartheta} \varphi + \kappa M_{\varphi^2} \varphi 
= \lambda\, M \varphi, \qquad \varphi^TM\varphi =1 
\]
with $\varphi\in \R^n$, where $n$ denotes the number of degrees of freedom. Here, $A$  is the stiffness matrix, $M$ is the mass matrix, and $M_{\vartheta}$ and $M_{\varphi^2}$ are the weighted mass matrices, respectively, where~$\varphi^2$ should be understood as the elementwise product.
Then the discrete version of~$\out{\phi}{\calJ\!\calA_\phi\,\phi}=\big(\phi,\calJ\!\calA_\phi\,\phi\big)_{L^2(\Omega)}$ equals 
\[
\lambda_\varphi = \varphi^T (A + 2M_{\vartheta} + \kappa M_{\varphi^2})\, \varphi,
\]
and the Newton equation takes the form
\begin{align*}
	(I - M\varphi \varphi^T)\big((A + 2M_{\vartheta} + 3\kappa M_{\varphi^2})\, \psi - \lambda_\varphi M \psi\big) 
	= - (I - M\varphi \varphi^T)(A + 2M_{\vartheta} + \kappa M_{\varphi^2})\, \varphi
\end{align*}
with unknown $\psi\in \{\xi\in \R^n\;:\; \xi^TM\varphi = 0\}=\im (I-\varphi\varphi^T\! M)$.

We demonstrate the performance of the resulting Riemannian Newton method in comparison with the SCF iteration combined with the optimal damping algorithm (ODA) proposed in~\cite{DioC07} and the energy-adapted Riemannian gradient descent method (RGD) of~\cite{HenP20} with a non-monotone step size control as outlined in~\cite{AltPS21}. The numerical experiments are performed on a~sufficiently large bounded domain  $\Omega = (-L,L)^2$, $L=8$, for two types of trapping potentials. In Section~\ref{sect:numerics:GPEVP:exp1}, we consider a simple harmonic trap, whereas in Section~\ref{sect:numerics:GPEVP:exp2}, we add an~additional disorder potential. The interaction parameter~$\kappa$ as well as the spatial resolution~$h$ will be specified below, separately for each case. 
%
%
\subsubsection{Ground state in a harmonic trap}\label{sect:numerics:GPEVP:exp1} 
For the first numerical experiment, we consider the harmonic trapping potential
\begin{equation}\label{eq:Vharm}
\vartheta_\text{harm}(x)  
= \tfrac{1}{2}\|x\|^2
\end{equation}
and interaction parameters $\kappa=10,100,1000$. The resulting ground states computed on a Cartesian mesh of width $h/(2L) = 2^{-10}$ are depicted in Figure~\ref{fig:groundstateHarmonic}. 
\begin{figure}
	\centering
	\includegraphics[width=0.3\linewidth]{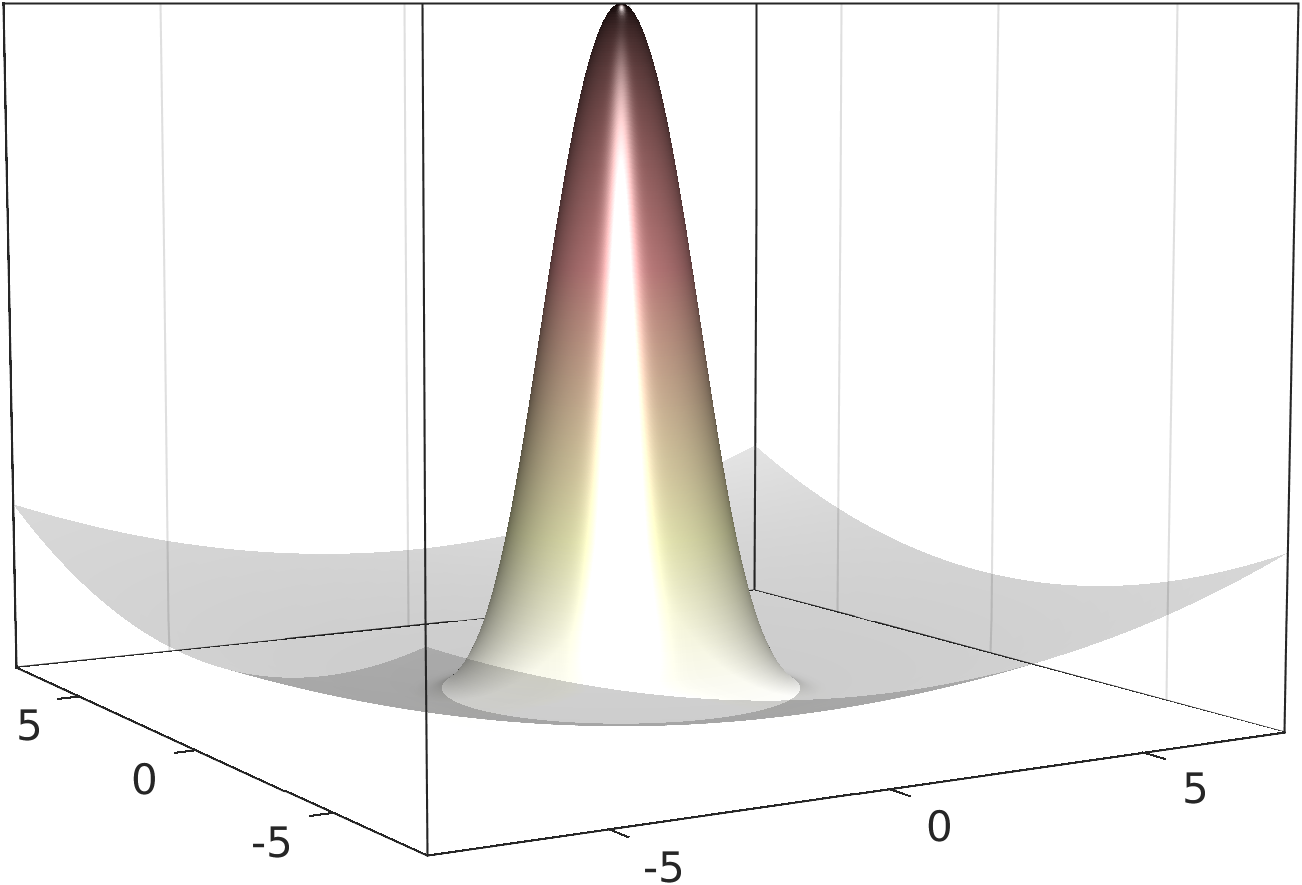}\quad
	\includegraphics[width=0.3\linewidth]{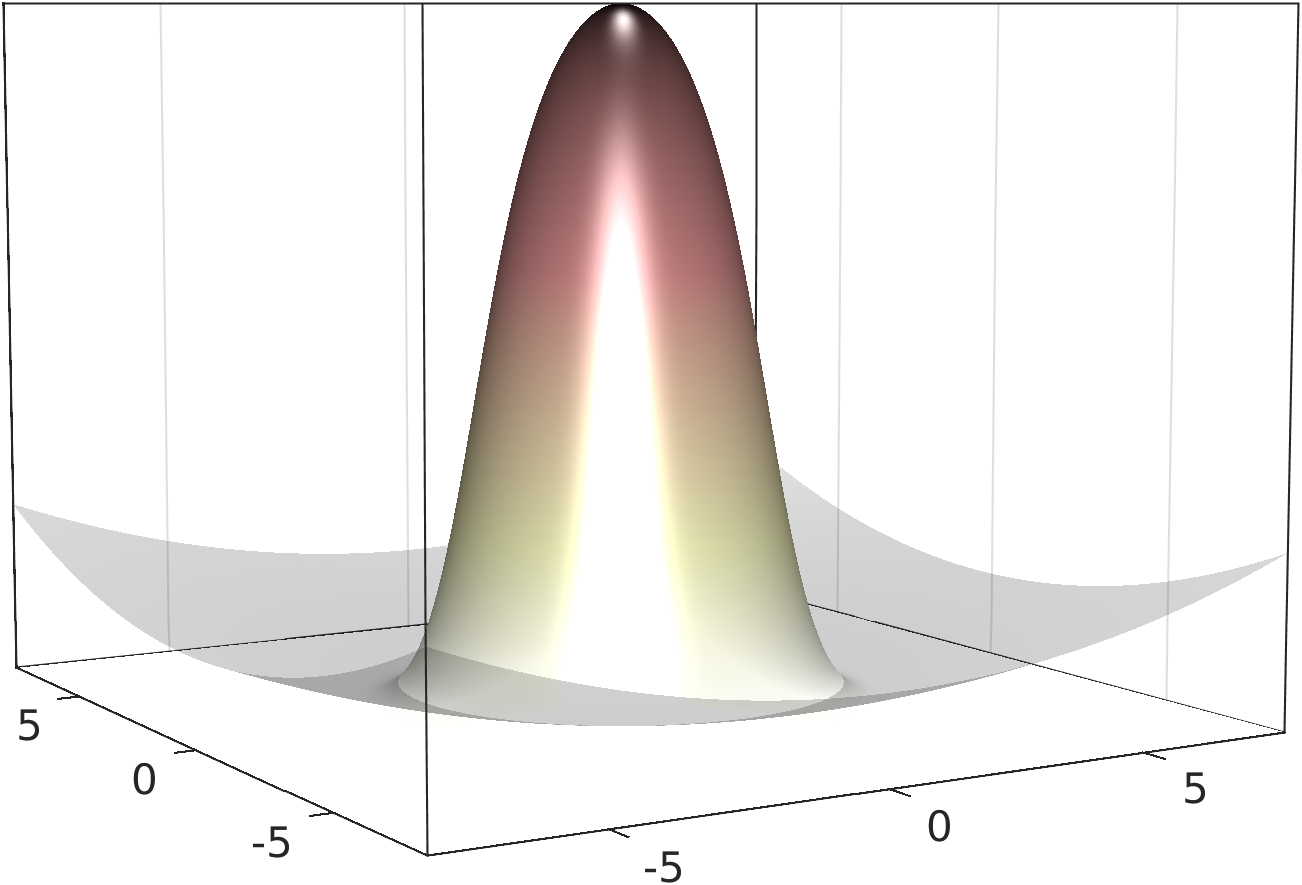}\quad
	\includegraphics[width=0.3\linewidth]{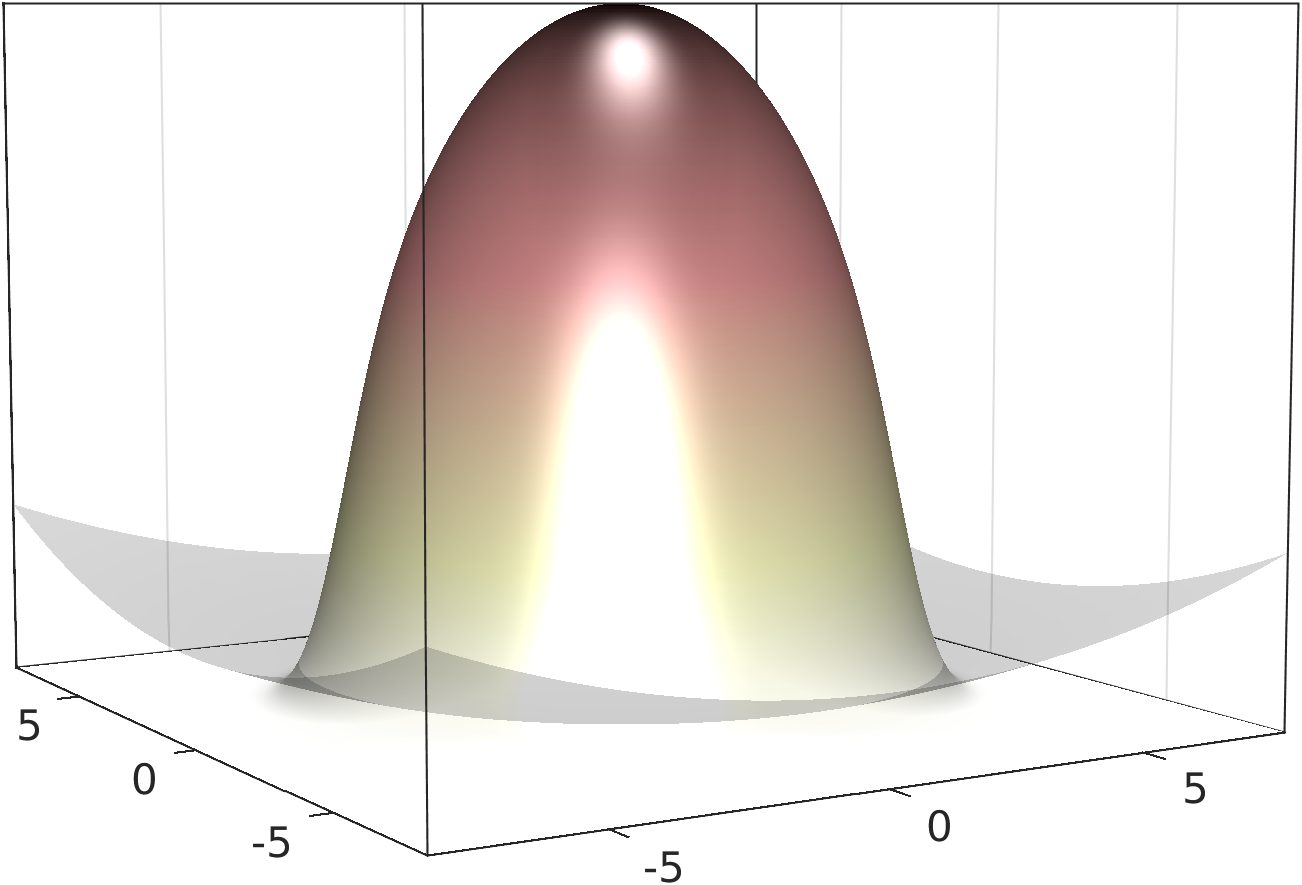}
	\caption{Ground state in the harmonic trap (potential in gray, properly rescaled) for $\kappa=10,100,1000$ (from left to right).}
	\label{fig:groundstateHarmonic}
\end{figure}
To generate a~joint and sufficiently accurate initial value for the three solvers of the discretized nonlinear eigenvector problem, we run the energy-adapted RGD method starting from the biquadratic finite element interpolation of the constant $1$ (respecting the homogeneous Dirichlet boundary condition). We stopped this iteration once the residual fell below the $10^{-2}$ tolerance and used the approximated ground state as the initial state to compare the asymptotic behavior of the three different solvers. The corresponding convergence histories are presented in Figure~\ref{fig:convergenceHarmonic} showing the evolution of the residuals during the iteration processes. It can be observed that the Riemannian Newton method (with sparse direct solution of the Newton equation using the Sherman--Morrison formula~\cite{SheM50}) reaches the tolerance of $10^{-8}$ in only three steps. While the performances of the SCF iteration and the energy-adaptive RGD method abate with increasing $\kappa$, the Riemannian Newton scheme appears to be extremely robust. 
We would like to emphasize that, although one Newton step is slightly more expensive than one step of any other competing method, the overall costs are much smaller, especially for increasing~$\kappa$. 

\begin{figure}
%
%
\begin{tikzpicture}

\begin{axis}[%
width=1.5in,
height=1.4in,
at={(0.0in,0.0in)},
scale only axis,
xmin=0.0,
xmax=10,
xlabel={iteration steps},
ymode=log,
ymin=1.1e-10,
ymax=0.01,
yticklabel={\empty},
yminorticks=true,
ylabel={residual},
ylabel near ticks,
axis background/.style={fill=white},
legend columns = 3,
legend style={legend cell align=left, align=left, at={(1.83,1.05)}, anchor=south, draw=white!15!black}
]
\addplot [color=mycolor4, line width=1.25pt, mark=square*]
  table[row sep=crcr]{%
0	0.00408183409999703\\
1	8.8443858587292e-06\\
2	1.80108216700751e-08\\
3	2.89302249402429e-13\\
};
\addlegendentry{Riemannian Newton\qquad}

\addplot [color=mycolor2, line width=1.25pt, mark=triangle*]
  table[row sep=crcr]{%
0	0.00408183409999703\\
1	5.77458363720633e-06\\
2	3.19293440539398e-06\\
3	1.6851548784485e-06\\
4	7.89970674131212e-07\\
5	3.96225554973076e-07\\
6	1.7788936112595e-07\\
7	8.89674347415363e-08\\
8	3.99223842415537e-08\\
9	1.99619826265496e-08\\
10	8.95807760731701e-09\\
};
\addlegendentry{SCF (ODA)\qquad}

\addplot [color=mycolor3, line width=1.25pt, mark=*]
  table[row sep=crcr]{%
0	0.00408183409999703\\
1	0.00404104865777588\\
2	0.000599276455279867\\
3	0.000139727345167003\\
4	6.07150937236149e-05\\
5	3.12830999863435e-05\\
6	5.43045146156012e-06\\
7	2.4324557419963e-07\\
8	9.67749649831225e-09\\
};
\addlegendentry{energy-adapted RGD} 

\end{axis}

%
%

\begin{axis}[%
width=1.5in,
height=1.4in,
at={(2.0in,0.0in)},
scale only axis,
xmin=0.0,
xmax=20,
xlabel={iteration steps},
ymode=log,
ymin=1.1e-10,
ymax=0.01,
yminorticks=true,
axis background/.style={fill=white},
]
\addplot [color=mycolor4, line width=1.25pt, mark=square*]
  table[row sep=crcr]{%
0	0.00490544064297205\\
1	2.15259919706661e-05\\
2	2.24174953294405e-07\\
3	2.14235478864784e-11\\
};

\addplot [color=mycolor2, line width=1.25pt, mark=triangle*]
  table[row sep=crcr]{%
0	0.00490544064297205\\
1	8.53970142410399e-05\\
2	5.39037399588439e-05\\
3	3.05083148488343e-05\\
4	2.72988706273288e-05\\
5	1.10386006782954e-05\\
6	1.10175699769235e-05\\
7	6.03158729481822e-06\\
8	7.60982501316957e-06\\
9	3.79696012334323e-06\\
10	2.42164132981341e-06\\
11	5.23174214089027e-07\\
12	2.72261710351939e-07\\
13	1.74913937843423e-07\\
14	1.04754057451453e-07\\
15	8.01908171827973e-08\\
16	3.81448977828642e-08\\
17	3.00795221280715e-08\\
18	3.02526193093071e-08\\
19	2.10326533883803e-08\\
20	2.92555157444167e-08\\
21	1.39946910002837e-08\\
22	9.41388192976028e-09\\
};

\addplot [color=mycolor3, line width=1.25pt, mark=*]
  table[row sep=crcr]{%
0	0.00490544064297205\\
1	0.00485648970508212\\
2	0.00116413599801866\\
3	0.000416598091704904\\
4	0.000133402357443834\\
5	7.279894426898e-06\\
6	2.00852284397611e-06\\
7	8.14093654016487e-07\\
8	3.03463505059465e-07\\
9	2.16100211530998e-07\\
10	1.33352506614173e-07\\
11	2.71321318568083e-08\\
12	8.86283557386394e-09\\
};

\end{axis}
%
%

\begin{axis}[%
width=1.5in,
height=1.4in,
at={(4.0in,0.0in)},
scale only axis,
xmin=0.0,
xmax=30,
xlabel={iteration steps},
ymode=log,
ymin=1.1e-10,
ymax=0.01,
yminorticks=true,
axis background/.style={fill=white},
]
\addplot [color=mycolor4, line width=1.25pt, mark=square*]
  table[row sep=crcr]{%
0	0.0021501935728488\\
1	5.04414619892899e-05\\
2	1.74274903320897e-06\\
3	8.61937473304991e-09\\
};

\addplot [color=mycolor2, line width=1.25pt, mark=triangle*]
  table[row sep=crcr]{%
0	0.0021501935728488\\
1	0.000123158573121054\\
2	0.000177873954957721\\
3	0.000154033394913062\\
4	8.098431578547e-05\\
5	0.000132413302957006\\
6	0.000134649442293355\\
7	6.99736771854764e-05\\
8	9.71100728467796e-05\\
9	0.000157064883899476\\
10	0.000137726065836452\\
11	8.68583159735097e-05\\
12	8.19794783764031e-05\\
13	4.91030161615511e-05\\
14	6.3313491548064e-05\\
15	0.000115251746942936\\
16	0.000133881133364934\\
17	4.57548303917571e-05\\
18	5.01512110300377e-05\\
19	4.52112316745892e-05\\
20	0.000131229534591911\\
21	0.000150525005298611\\
22	4.48354930120353e-05\\
23	4.52824810536671e-05\\
24	3.17159232421044e-05\\
25	3.83524893014111e-05\\
26	3.09544797218427e-05\\
27	3.11794386093035e-05\\
28	6.04379677741512e-05\\
29	3.78241459590894e-05\\
30	3.15906819964814e-05\\
31	1.54094499951244e-05\\
32	2.48688707282603e-05\\
33	1.7795837070284e-05\\
34	3.53070978491408e-05\\
35	1.96030750110579e-05\\
36	1.68822382091727e-05\\
37	1.98305420142684e-05\\
38	1.49097904862176e-05\\
39	2.96698933799893e-05\\
40	2.0247001303632e-05\\
41	1.95261274683078e-05\\
42	1.50219012545263e-05\\
43	3.49324314779954e-05\\
44	1.6991359367826e-05\\
45	1.38604943620061e-05\\
46	1.38334961880011e-05\\
47	1.23708226527248e-05\\
48	1.86887961859534e-05\\
49	1.38179422593034e-05\\
50	2.80666864701684e-05\\
};

\addplot [color=mycolor3, line width=1.25pt, mark=*, mark repeat=2]
  table[row sep=crcr]{%
0	0.0021501935728488\\
1	0.0021291682171424\\
2	0.00116811589819549\\
3	0.00092862497103897\\
4	9.62418036640646e-05\\
5	3.47524332258158e-05\\
6	1.93748988018166e-05\\
7	1.17155258053568e-05\\
8	8.27370599077937e-06\\
9	7.1727132150353e-06\\
10	2.65332671169553e-06\\
11	1.44059636274283e-06\\
12	1.15095815872819e-06\\
13	6.27435681775532e-07\\
14	3.69951609164271e-07\\
15	3.60759972729042e-07\\
16	2.1386330882466e-07\\
17	1.20842644100953e-07\\
18	9.77804936061327e-08\\
19	5.31204473221539e-08\\
20	3.15032893455613e-08\\
21	3.3993980946963e-08\\
22	1.75821707437573e-08\\
23	1.05351035004685e-08\\
24	8.72767848664894e-09\\
};

\end{axis}
\end{tikzpicture}%
	\caption{Convergence history of the residuals for the ground state in the harmonic trap for $\kappa=10,100,1000$ (from left to right). }
	\label{fig:convergenceHarmonic}
\end{figure}
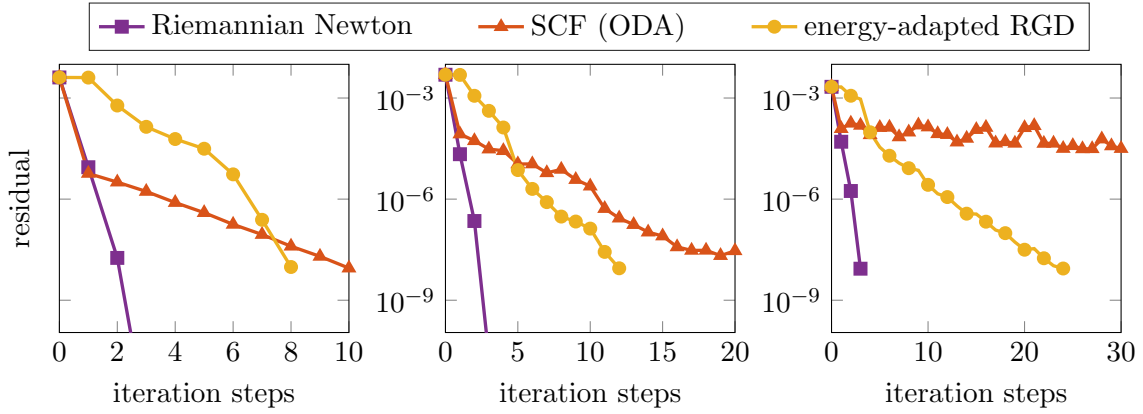
The convergence behavior of the Riemannian Newton scheme is also robust to the underlying mesh size $h$ and, hence, independent of the dimension of the discretization space. This is demonstrated in Figure~\ref{fig:meshind} with a fixed choice of $\kappa=1000$. We consider a sequence of meshes with $h/(2L) = 2^{-1},\ldots,2^{-10}$ and use the same procedure as above to generate initial guesses with residuals of order $10^{-2}$. The left graph shows the number of (outer) iterations of the Riemannian Newton method to fall below the tolerance of $10^{-10}$ for each of these mesh sizes. An increase in the number of iterations with smaller mesh size is not observed. In our experience, this mesh independence of the Riemannian Newton optimization scheme is representative for many other choices of potentials and interaction parameters. Note, however, that this does not mean that the costs of a Newton step are independent of~$h$. As for every Laplace-type problem, methods such as multigrid need to be implemented in order to obtain a mesh-independence also for the inner iteration. This holds for all competing methods in the same way. For completeness, Figure~\ref{fig:meshind} also shows the corresponding errors in the minimum energy approximation as a function of the mesh size, demonstrating the optimal fourth-order convergence rate of the biquadratic finite element implementation~\cite{HenY24}. 
\begin{figure}
%
%
\begin{tikzpicture}

\begin{axis}[%
width=2.2in,
height=1.5in,
at={(0.0in,0.0in)},
scale only axis,
xmode=log,
xmin=0.0008,
xmax=0.58,
xlabel={mesh size~$h$},
ymin=0.4,
ymax=4.6,
ylabel near ticks,
ytick={1, 2, 3, 4},
ylabel={$\#$ Newton steps},
yminorticks=true,
axis background/.style={fill=white},
]
\addplot [color=mycolor4, line width=1.25pt, mark=square*]
  table[row sep=crcr]{%
0.5	1\\
0.25	4\\
0.125	2\\
0.0625	2\\
0.03125	2\\
0.015625	3\\
0.0078125	3\\
0.00390625	3\\
0.001953125	3\\
0.0009765625	3\\
};

\end{axis}

%
%

\begin{axis}[%
width=2.2in,
height=1.5in,
at={(3.0in,0.0in)},
scale only axis,
xmode=log,
xmin=0.0008,
xmax=0.58,
xlabel={mesh size~$h$},
ymode=log,
ymin=1.1e-11,
ymax=1.2,
yminorticks=true,
ylabel={error in min.~energy},
axis background/.style={fill=white},
]
\addplot [color=mycolor4, line width=1.25pt, mark=square*]
  table[row sep=crcr]{%
0.5	0.820610320871369\\
0.25	0.184644385494066\\
0.125	0.0168166218939145\\
0.0625	0.00151041329402979\\
0.03125	0.000123253358859543\\
0.015625	8.40447333594341e-06\\
0.0078125	5.3786262199651e-07\\
0.00390625	3.38234436014773e-08\\
0.001953125	2.11987050136031e-09\\
0.0009765625	1.32862609802942e-10\\
};

\addplot [color=gray, dashed]
  table[row sep=crcr]{%
0.5	0.0625\\
0.25	0.00390625\\
0.125	0.000244140625\\
0.0625	1.52587890625e-05\\
0.03125	9.5367431640625e-07\\
0.015625	5.96046447753906e-08\\
0.0078125	3.72529029846191e-09\\
0.00390625	2.3283064365387e-10\\
0.001953125	1.45519152283669e-11\\
0.0009765625	9.09494701772928e-13\\
};

\end{axis}
\end{tikzpicture}%
	\caption{Computing the ground state in the harmonic trap for \mbox{$\kappa=1000$}: Iteration count of the Riemannian Newton method to fall below the tolerance $10^{-10}$ (left) and the error in the minimal energy (right) versus mesh size $h$. The dashed line indicates order $h^4$. }
	\label{fig:meshind}
\end{figure}
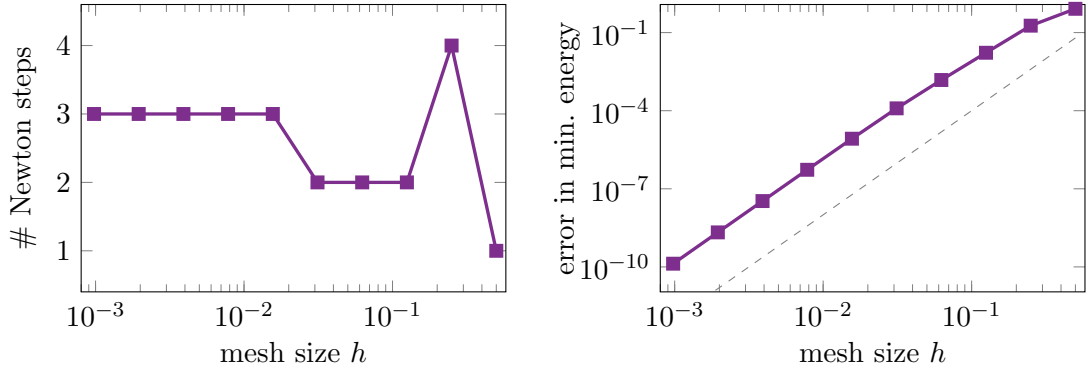

\subsubsection{Localized ground state in a disorder potential}\label{sect:numerics:GPEVP:exp2}
The second experiment considers the computationally more difficult case where the external potential is the sum of the harmonic potential $\vartheta_\text{harm}$ defined in~\eqref{eq:Vharm} and a~potential $\vartheta_\text{rand}$ reflecting a high degree of disorder. The disorder part $\vartheta_\text{rand}$ is chosen as a~piecewise constant function on the Cartesian mesh of width $2L\varepsilon$, $\varepsilon = 2^{-6}$, taking values $0$ or $\varepsilon^{-2}$ as depicted in Figure~\ref{fig:groundstateDisorder}.
For a potential in such a scaling regime, the low-energy eigenstates essentially localize in terms of an~exponential decay of their moduli relative to the small parameter $\varepsilon$. For the linear case, i.e., for~$\kappa=0$, this has been analyzed in~\cite{AltHP20}. For growing~$\kappa$, the ground state consists of a growing number of localized peaks; see~Figure~\ref{fig:groundstateDisorder}. Further details on the phenomenon of localization in the Gross--Pitaevskii equation and the onset of delocalization can be found in~\cite{AltP19,AltHP22}.
As in the previous experiments, we use biquadratic finite elements on a Cartesian mesh of width $h/(2L) = 2^{-10}$. To illustrate the localization behavior that occurs with the current parameter scaling for $\kappa\lesssim 1$, we consider the interaction parameters $\kappa=0.1,1,10$. The ground states for $\kappa=1,10$ are shown in Figure~\ref{fig:groundstateDisorder}. The ground state for $\kappa=0.1$ is hardly distinguishable from the one for $\kappa=1$ and, therefore, it is not shown in a~separate figure. 
\begin{figure}
	\centering
	\includegraphics[width=0.23\linewidth]{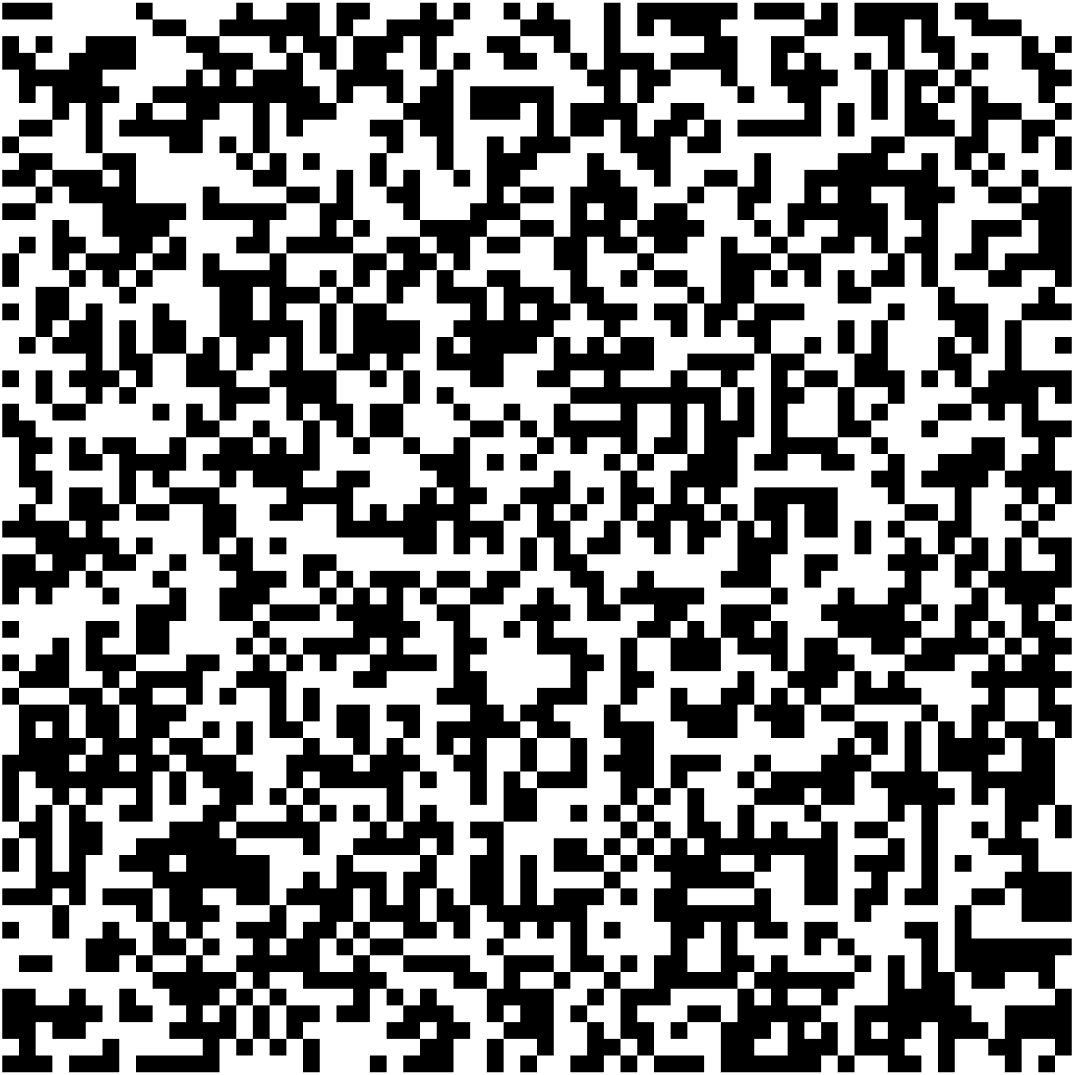}\quad
\includegraphics[width=0.33\linewidth]{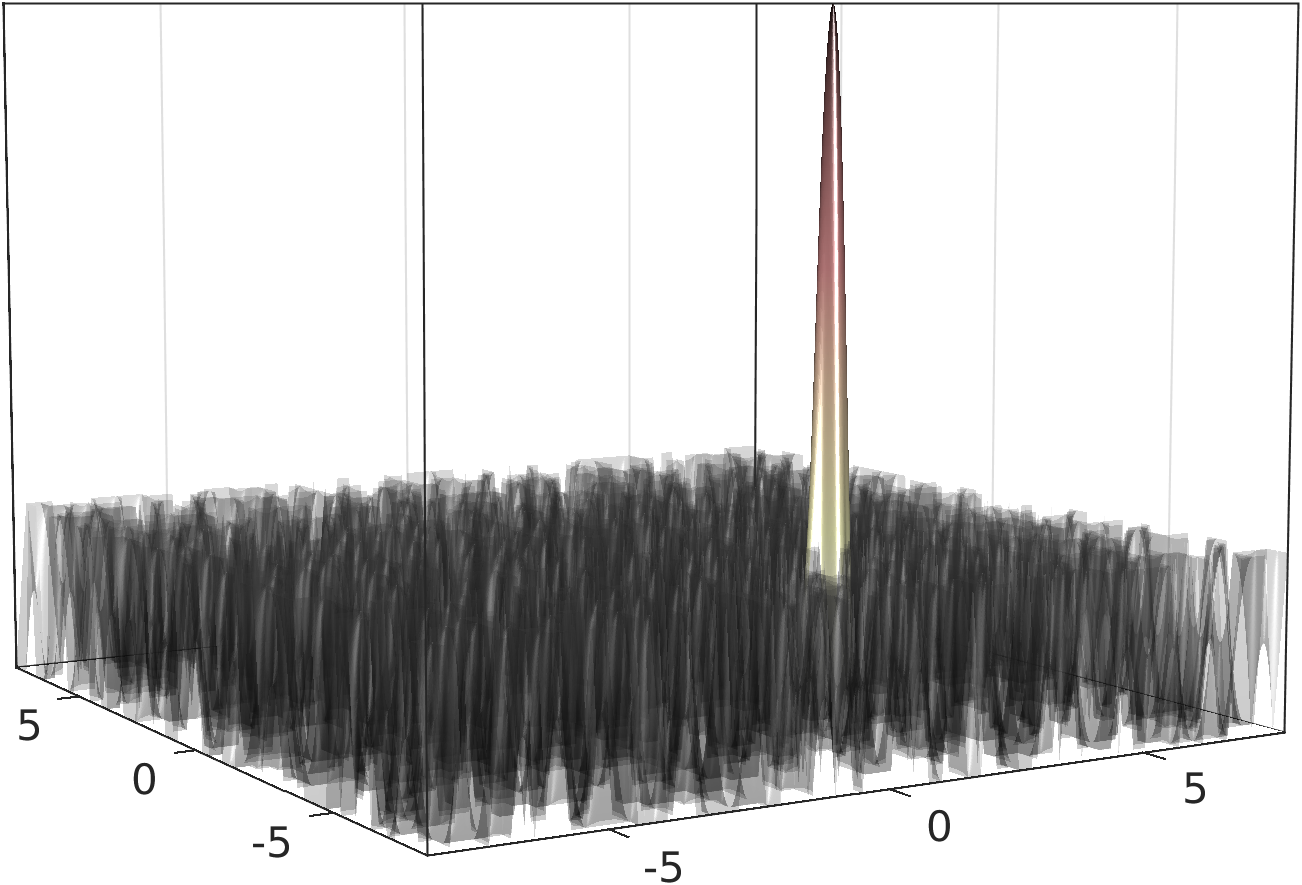}\quad
\includegraphics[width=0.33\linewidth]{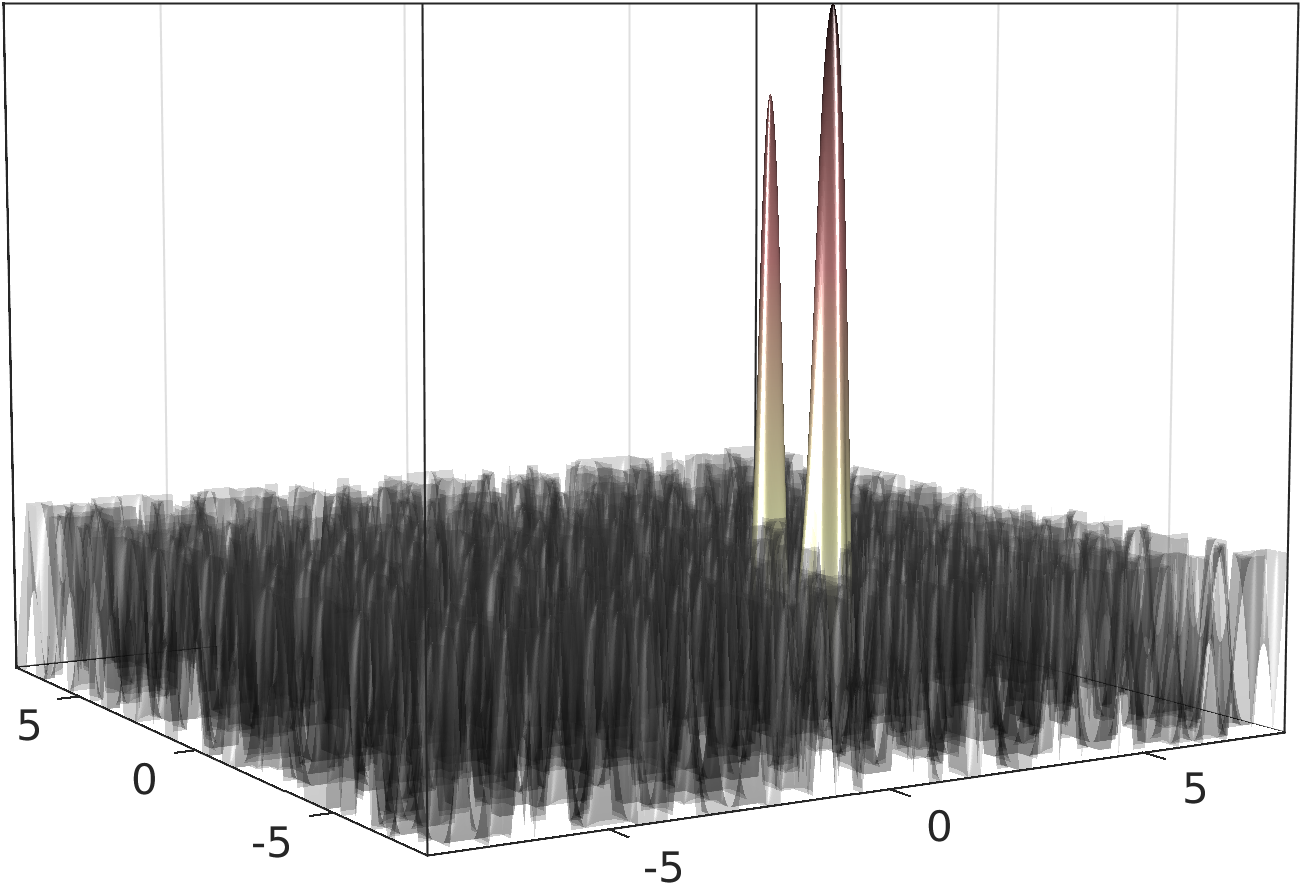}
	\caption{Piecewise constant disorder potential $\vartheta_\text{rand}$ (left, black elements refer to the value~$\varepsilon^{-2}$, white elements refer to the value $0$, $\varepsilon=2^{-6}$) and the corresponding ground states for $\kappa=1$ (middle) and $\kappa =10$ (right).}
	\label{fig:groundstateDisorder}
\end{figure}

Figure~\ref{fig:convergenceDisorder} displays the convergence history of the residuals for~$\kappa=0.1,1,10$. We employed the same strategy as in Section~\ref{sect:numerics:GPEVP:exp1} to generate suitable initial guesses with the residuals of order~$10^{-2}$ used for all methods. The results clearly indicate that the ground state computations with the disorder potential are already challenging for smaller values of $\kappa$. Particularly, the energy-adapted RGD method needs much larger iteration counts, which according to~\cite{Hen23}, may be related to smaller spectral gaps between the first and second eigenvalue. The Riemannian Newton method, on the other hand, still performs well and reaches the prescribed tolerance $10^{-8}$ for the residual in only a few steps in all three examples. For comparison, the SCF iteration converges very fast in the almost linear case but suffers from larger values of~$\kappa$ as the energy-adapted RGD method. Here, again, the higher costs per Newton step are compensated by far by the very small number of needed iteration steps.  
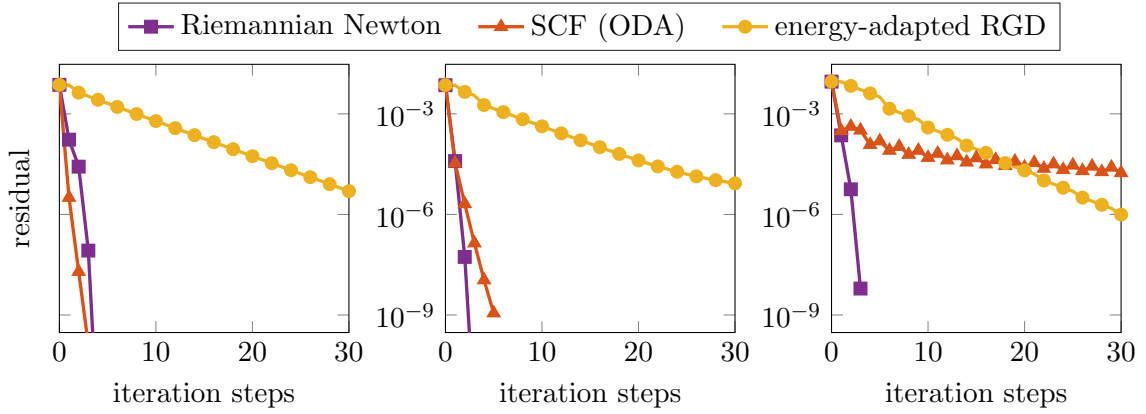
\begin{figure}
%
%
\begin{tikzpicture}

\begin{axis}[%
width=1.5in,
height=1.4in,
at={(0.0in,0.0in)},
scale only axis,
xmin=0,
xmax=30,
xlabel={iteration steps},
ymode=log,
ymin=3e-10,
ymax=0.03,
ylabel={residual},
ylabel near ticks,
yminorticks=true,
yticklabel={\empty},
ytick={1e-3,1e-6,1e-9},
axis background/.style={fill=white},
legend columns = 3,
legend style={legend cell align=left, align=left, at={(1.83,1.05)}, anchor=south, draw=white!15!black}
]
\addplot [color=mycolor4, line width=1.25pt, mark=square*]
  table[row sep=crcr]{%
0	0.0073270004364367\\
1	0.000169657086671074\\
2	2.65996476978382e-05\\
3	8.34641527335287e-08\\
4	3.12934747463332e-13\\
};
\addlegendentry{Riemannian Newton\quad}

\addplot [color=mycolor2, line width=1.25pt, mark=triangle*]
  table[row sep=crcr]{%
0	0.0073270004364367\\
1	3.18525648747122e-06\\
2	1.96327689892877e-08\\
3	1.32399847640338e-10\\
};
\addlegendentry{SCF (ODA)\quad}

\addplot [color=mycolor3, line width=1.25pt, mark=*, mark repeat=2]
  table[row sep=crcr]{%
0	0.0073270004364367\\
1	0.00725456068510416\\
2	0.00428160062896782\\
3	0.00334520693662183\\
4	0.00261754755886466\\
5	0.00205003940601815\\
6	0.00160665507906547\\
7	0.00125981760149577\\
8	0.000988251466584531\\
9	0.000775474368755209\\
10	0.000608670052608787\\
11	0.000477852358189424\\
12	0.00037522302081807\\
13	0.000294686794670846\\
14	0.000231472862912054\\
15	0.000181845913828762\\
16	0.0001428785088297\\
17	0.000112276338280967\\
18	8.82399047272373e-05\\
19	6.93580013672912e-05\\
20	5.4523196219359e-05\\
21	4.28666231518749e-05\\
22	3.37061808388072e-05\\
23	2.65065174475589e-05\\
24	2.08472224333446e-05\\
25	1.63982340693651e-05\\
26	1.29002839057837e-05\\
27	1.01497669217095e-05\\
28	7.98670810142878e-06\\
29	6.28544672437945e-06\\
30	4.94722634918467e-06\\
};
\addlegendentry{energy-adapted RGD} 

\end{axis}
%
%

\begin{axis}[%
width=1.5in,
height=1.4in,
at={(2.0in,0.0in)},
scale only axis,
xmin=0,
xmax=30, 
xlabel={iteration steps},
ymode=log,
ymin=3e-10,
ymax=0.03,
yminorticks=true,
ytick={1e-3,1e-6,1e-9},
axis background/.style={fill=white},
]
\addplot [color=mycolor4, line width=1.25pt, mark=square*]
  table[row sep=crcr]{%
0	0.00714550385519358\\
1	3.90793446101643e-05\\
2	5.39048224151932e-08\\
3	1.11577482377225e-12\\
};

\addplot [color=mycolor2, line width=1.25pt, mark=triangle*]
  table[row sep=crcr]{%
0	0.00714550385519358\\
1	3.37534848620148e-05\\
2	2.05993615465906e-06\\
3	1.39320785677994e-07\\
4	1.10662802620566e-08\\
5	1.13856345601605e-09\\
};

\addplot [color=mycolor3, line width=1.25pt, mark=*, mark repeat=2]
  table[row sep=crcr]{%
0	0.00714550385519358\\
1	0.0070748843293239\\
2	0.00451784328597536\\
3	0.00352927467178377\\
4	0.00183019473800457\\
5	0.0014328272649922\\
6	0.00112233484010931\\
7	0.000879533856971847\\
8	0.000689564311565753\\
9	0.000540873564855092\\
10	0.000424458509714585\\
11	0.000333296389567481\\
12	0.000261901186192119\\
13	0.000205986518537624\\
14	0.000162199285617314\\
15	0.000127916984564057\\
16	0.000101086016273619\\
17	8.00988973362364e-05\\
18	6.36954244801334e-05\\
19	5.08879060567179e-05\\
20	4.09004197126484e-05\\
21	3.31231367999822e-05\\
22	2.70749670498395e-05\\
23	2.23758846954064e-05\\
24	1.87244349088283e-05\\
25	1.58816432129814e-05\\
26	1.36581347352048e-05\\
27	1.19051140024597e-05\\
28	1.05067889494917e-05\\
29	9.37458433531206e-06\\
30	8.44174447894859e-06\\
};

\end{axis}
%
%

\begin{axis}[%
width=1.5in,
height=1.4in,
at={(4.0in,0.0in)},
scale only axis,
xmin=0,
xmax=30,
xlabel={iteration steps},
ymode=log,
ymin=3e-10,
ymax=0.03,
yminorticks=true,
ytick={1e-3,1e-6,1e-9},
axis background/.style={fill=white},
]
\addplot [color=mycolor4, line width=1.25pt, mark=square*]
  table[row sep=crcr]{%
0	0.00913832090896681\\
1	0.000227703732539956\\
2	5.61860863357299e-06\\
3	6.14789161432585e-09\\
};

\addplot [color=mycolor2, line width=1.25pt, mark=triangle*]
  table[row sep=crcr]{%
0	0.00913832090896681\\
1	0.000312259180617435\\
2	0.000418021849975676\\
3	0.000322933191197318\\
4	0.000120374504373254\\
5	0.000154497424600003\\
6	8.04702614251825e-05\\
7	0.000105052198793906\\
8	6.10160927324236e-05\\
9	8.06018097232719e-05\\
10	4.93718621074455e-05\\
11	6.56967266980879e-05\\
12	4.15609022629068e-05\\
13	5.55761074793653e-05\\
14	3.59341395431541e-05\\
15	4.82236619901366e-05\\
16	3.1677145961146e-05\\
17	4.26263615311121e-05\\
18	2.83386685275971e-05\\
19	3.82157081501467e-05\\
20	2.56474509868948e-05\\
21	3.4646625795731e-05\\
22	2.34301790804248e-05\\
23	3.1696938810258e-05\\
24	2.15707269863429e-05\\
25	2.92168576160835e-05\\
26	1.99882613442542e-05\\
27	2.71015806479802e-05\\
28	1.862472384919e-05\\
29	2.52755092792944e-05\\
30	1.74373057325557e-05\\
};

\addplot [color=mycolor3, line width=1.25pt, mark=*, mark repeat=2]
  table[row sep=crcr]{%
0	0.00913832090896681\\
1	0.00904902849826815\\
2	0.00681559234123386\\
3	0.00526422913851671\\
4	0.00408397609718599\\
5	0.00317351736324098\\
6	0.00141904173514767\\
7	0.00110426025300831\\
8	0.000859683472180731\\
9	0.000669508251371807\\
10	0.000390059685034368\\
11	0.000303936816920537\\
12	0.000236887526685005\\
13	0.000184670981289435\\
14	0.000113237006773309\\
15	8.83115846555946e-05\\
16	6.8886072237149e-05\\
17	5.37434148769119e-05\\
18	3.38442092571981e-05\\
19	2.64134541533001e-05\\
20	2.06176620806963e-05\\
21	1.60962362643005e-05\\
22	1.0268271180763e-05\\
23	8.01889562338636e-06\\
24	6.26323302975174e-06\\
25	4.89268885807892e-06\\
26	3.17405490834396e-06\\
27	2.48020239253258e-06\\
28	1.93830689698531e-06\\
29	1.51502417245809e-06\\
30	9.88697852498068e-07\\
};

\end{axis}
\end{tikzpicture}%
	\caption{Convergence history of the residuals for the ground state in a~disorder potential for $\kappa=0.1,1,10$ (from left to right).}
	\label{fig:convergenceDisorder}
\end{figure}
%
\subsection{Kohn--Sham model}\label{sect:numerics:KS}
For the Kohn--Sham energy functional $\calE_{\rm KS}$ introduced in~\eqref{eq:energyKS}, we have 
\begin{align*}
	\langle \calA_\phibf\,\vbf, \wbf\rangle
	& = \int_\Omega \tr\bigl((\nabla \vbf)^T\nabla\wbf\bigr) \dx 
	+ 2 \int_\Omega  \vartheta_\text{ion}\, \vbf\cdot\wbf \dx \\
	& \qquad + 2\int_\Omega \!\Big(\!\int_{\Omega} \frac{\rho(\phibf(y))}{\|x-y\|}\, \dy \Big)\, \vbf\cdot\wbf \dx
	+ 2 \int_\Omega \mu_{\rm xc}(\rho(\phibf))\, \vbf\cdot\wbf \dx
\end{align*}
with $\mu_{\rm xc}(\rho) = \frac{\rm d}{{\rm d}\rho} \big(\rho\,\epsilon_{\rm xc}(\rho)\big)$. Moreover, the operator $\calB_\phibf$ has the form 
\begin{align*}
	\langle\calB_\phibf\,\vbf,\wbf\rangle
	= 4 \int_{\Omega}\!\Big(\!\int_{\Omega} \frac{\phibf\cdot\vbf}{\|x-y\|} \dy \Big) \,\phibf\cdot\wbf \dx 
	+ 4 \int_{\Omega} \zeta_{\rm xc}(\rho(\phibf)) (\phibf\cdot\vbf)\, (\phibf\cdot \wbf) \dx, 
\end{align*} 
where $\zeta_{\rm xc}(\rho)=\frac{{\rm d}}{{\rm d}\rho} \mu_{\rm xc}(\rho)$. 
The exchange-correlation function $\epsilon_\text{xc}(\rho)$ can additively be decomposed as \mbox{$\epsilon_\text{xc}(\rho) =\epsilon_\text{x}(\rho) + \epsilon_\text{c}(\rho)$}, where the exchange component $\epsilon_\text{x}(\rho)$ has the particular analytical expression  $\epsilon_\text{x}(\rho)=-\frac{3}{4}\big(\frac{3}{\pi}\rho\big)^{1/3}$ and the  correlation component $\epsilon_\text{c}(\rho)$ is usually unknown, but can be fitted by using quantum Monte-Carlo data~\cite{PerW92}. 
For the numerical experiments, we use the MATLAB toolbox KSSOLV~\cite{YanMLW09}, in which the correlation component is implemented as 
\begin{equation*}
	\epsilon_\text{c}(\rho) = \left\{\begin{array}{ll} 
		a_1 + a_2\,r(\rho) + \big(a_3 + a_4\,r(\rho)\big) \ln(r(\rho)), &\text{if } r(\rho)<1, \\[0.3em]
		b_1^{-1} \big( 1+b_2\,\sqrt{r(\rho)}+b_3\,r(\rho) \big), 
	 	&\text{if } r(\rho)\geq 1,\end{array}\right.
\end{equation*}
where $r(\rho)=\big(\frac{4\pi}{3}\rho\big)^{-1/3}$ is the Wigner-Seitz radius, and $a_j,b_j\in\R$ are fitted constants; see~\cite[App.~C]{PerZ81}. 

For the spatial discretization, we employ the planewave discretization method as used in KSSOLV. With~$n$ denoting the number of degrees of freedom, the matrix $\Phi\in\C^{n\times p}$ contains the coefficients of the approximation of the wave function $\phibf$. Then the discretized Kohn--Sham energy functional is given by 
\[
E(\Phi) 
= \frac{1}{2} \trace\big(\Phi^*(L+2D_{\rm ion})\Phi\big) + 
\frac{1}{2}\,\rho_h(\Phi)^TL^+\rho_h(\Phi) + \rho_h(\Phi)^T\epsilon_{\rm xc}(\rho_h(\Phi)),
\]
where $\Phi^*$ denotes the complex conjugate transpose of $\Phi$, $L\in\C^{n\times n}$ is the discrete Laplace matrix, $L^+\in\C^{n\times n}$ is its pseudoinverse, \mbox{$D_{\rm ion}\in\R^{n\times n}$} is the discretized ionic potential, and~$\rho_h(\Phi) = \diag(\Phi\Phi^*) \in \R^n$ is the discretized electronic charge density. Note that the matrix $L$ is Hermitian and $D_{\rm ion}$ is diagonal. In this setting, the minimization problem 
\[
\min\limits_{\Phi\in\Stiefelpn} E(\Phi)
\]
 on the (compact) Stiefel manifold 
$
\Stiefelpn 
= \big\{ \Phi\in \C^{n\times p}\; :\; \Phi^* \Phi = I_p \big\} 
$
leads to the finite-dimensional nonlinear eigenvector problem 
\begin{equation*}
	\arraycolsep=2pt
	\begin{array}{rcl}
		A(\Phi)\Phi-\Phi\, \Lambda & = & 0, \\
		\Phi^*\Phi -I_p& = & 0, 
	\end{array}
\end{equation*}
where the discrete Kohn--Sham operator is given by
\[
A(\Phi) 
= L + 2\, D_{\rm ion} + 2\Diag\big(L^+\rho_h(\Phi) + \mu_{\rm xc}(\rho_h(\Phi))\big).
\]
Further, the Riemannian gradient of $E(\Phi)$ becomes
\begin{equation}\label{eq:gradEdiscr}
\grad E(\Phi)
= (I-\Phi\Phi^*)A(\Phi)\Phi
= A(\Phi)\Phi-\Phi\big(\Phi^*A(\Phi)\Phi\big).
\end{equation}
In the Riemannian Newton method on the Stiefel manifold~$\Stiefelpn$, we need to solve the equation 
\begin{equation}\label{eq:NewtonEqSd}
	P_{\Phi}\big(A(\Phi)\Psi+B(\Phi,\Psi)-\Psi\Phi^*A(\Phi)\Phi\big)
	= -(I-\Phi\Phi^*) A(\Phi)\Phi 
\end{equation}
for $\Psi$ belonging to the tangent space~$T_\Phi\, \Stiefelpn$. Therein, 
\[
	P_{\Phi}(Y)
	= Y - \tfrac12\, \Phi \big(\Phi^* Y + Y^* \Phi\big)  
\]
is the orthogonal projector onto~$T_\Phi\,\Stiefelpn$ and 
\[
	B(\Phi,\Psi) 
	= 2\Diag\big((L^++\Diag(\zeta_{\rm xc}(\rho_h(\Phi))))\diag(\Phi\Psi^*+\Psi\Phi^*)\big)\Phi 
\]
is the discretization of the operator $\calB_\phibf$. On the Grassmann manifold $\Grasspn=\Stiefelpn/\OrthGr$, the Newton equation takes the form 
\begin{equation}\label{eq:NewtonEqGd}
	(I-\Phi\Phi^*) \big(A(\Phi)\Psi_{\Phi}^{\rm h} + B(\Phi,\Psi_{\Phi}^{\rm h})-\Psi_{\Phi}^{\rm h}\Phi^*A(\Phi)\Phi\big) 
	= -(I-\Phi\Phi^*) A(\Phi)\Phi
\end{equation}
for $\Psi_\Phi^{\rm h}\in\calH_{\Phi} = \{\Psi\in T_\Phi\, \Stiefelpn\; :\; \Phi^*\Psi=0\}$. 

In our experiments, we compare the calculation of the ground state by using the SCF iteration with the Anderson charge mixing scheme \cite{YanMLW09}, the energy-adaptive RDG with non-monotone step size control \cite{AltPS21}, and the Riemannian Newton methods on the Stiefel manifold (RNS) and on the Grassmann manifold (RNG). In all these methods, we choose the same initial guess for the wave function by performing one SCF step with a~randomly generated starting point and stop the iterations once the Frobenius norm of the residual 
$R(\Phi_k) = A(\Phi_k)\Phi_k-\Phi_k\Lambda_k$
 with $\Lambda_k=\Phi_k^TA(\Phi_k)\Phi_k$ is smaller than the tolerance~$10^{-8}$. Note that due to \eqref{eq:gradEdiscr}, $\|R(\Phi_k)\|_F=\|\grad E(\Phi_k)\|_F$, i.e., the norms of the residuals provide the information on the size of the Riemannian gradients.
In both Newton methods and the energy-adaptive RGD method, we use the qR decomposition based retractions. The reference minimal energy~$E_{\min}$ is computed by the RNG method with the tolerance~$10^{-10}$. 

All algorithms are performed in an {\em inexact manner}, i.e., the occurring linear systems are only solved up to a certain tolerance. In RNG, for instance, we follow Algorithm~\ref{alg:infdim:RiemNewtonGr}  using the MATLAB built-in function {\tt minres} as a~linear system solver with the adaptive tolerance $\min(1/k, 10^{-3} \|\grad E(\Phi_{k-1})\|_F)$ and the maximal number of inner iterations~$\ell_{\max}=15$. The remaining parameters are chosen as~\mbox{$\eta=10^{-8}$}, $\delta=0.5$, and~$\sigma=10^{-4}$. In RNS, we proceed similarly, with the only difference that instead of \eqref{eq:NewtonEqGd} we solve the Newton equation of the form~\eqref{eq:NewtonEqSd}. For solving the linear eigenvalue problems in SCF, we employ the KSSOLV built-in LOBPCG algorithm for the pentacene model in Section~\ref{sect:numerics:KS:exp1} and the MATLAB built-in function {\tt eigs} for the graphene model in Section~\ref{sect:numerics:KS:exp2}. Switching to another eigenvalue solver is necessary due to the ill-conditioning in LOBPCG for the latter example. In both cases, the tolerance for the  inner iterations is set to be $\min(10^{-3}, 10^{-3} \|\grad E(\Phi_{k-1})\|_F)$. For linear system solvers, we use the kinetic energy preconditioner, which provides, especially for the energy-adaptive RGD method, better numerical results than the KSSOLV built-in Teter--Payne--Allan preconditioner. 
	
Finally, we would like to mention that the Riemannian Newton scheme is again robust in terms of the discretization parameter used in KSSOLV. This means that the number of needed iterations of the Riemannian Newton method to fall below a certain tolerance is not effected by finer discretizations (as long as the number of inner iterations steps is sufficiently large). 	
\subsubsection{Pentacene molecule}\label{sect:numerics:KS:exp1}
In the first numerical experiment, we calculate the ground state for the pentacene molecule ${\rm C}_{22}{\rm H}_{14}$ with $p=51$ electron orbitals. 
A~spatial planewave discretization on a~$80\times\ 55\times 160$ sampling grid gives  the discrete model of dimension $n=44791$. In Figure~\ref{fig:convergencePentacene}, we present the convergence history of the residuals  and the energy reduction during the iterations. One can see that both Newton methods have very similar behavior and converge within $8$ and $9$ iterations, respectively. In comparison, the SCF method and the energy-adaptive RGD method require $18$ and $54$ iterations to converge, respectively. In terms of computing time, all methods perform quite similarly in this expe\-ri\-ment. This can also be seen in Table~\ref{eq:KSresults}, which shows the values of the energy functional, the reached residuals, the number of (outer) iterations, the total number of Hamiltonian evaluations, and the CPU time.
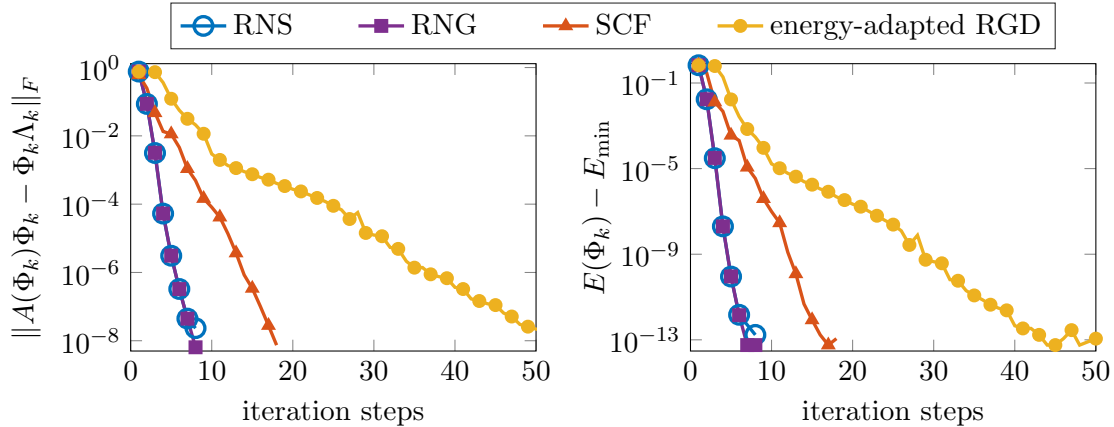
\begin{figure}
%
%
\begin{tikzpicture}

\begin{axis}[%
width=2.1in,
height=1.5in,
at={(0.0in,0.0in)},
scale only axis,
xmin=0.0,
xmax=50,
xlabel={iteration steps},
ymode=log,
ymin=5e-9,
ymax=1.3,
ylabel near ticks,
yminorticks=true,
ylabel={$\|A(\Phi_k)\Phi_k-\Phi_k\Lambda_k\|_F$},
axis background/.style={fill=white},
legend columns = 4,
legend style={legend cell align=left, align=left, at={(1.19,1.05)}, anchor=south, draw=white!15!black}
]

\addplot [color=mycolor1, line width=1.25pt, mark=o, mark size=3.5]
  table[row sep=crcr]{%
1	0.763418551084656\\
2	0.086109452492178\\
3	0.00314558696475516\\
4	5.29308941124361e-05\\
5	3.0860896628072e-06\\
6	3.27923780955634e-07\\
7	4.44964606870635e-08\\
8	2.33325017267211e-08\\
};
\addlegendentry{RNS\qquad}

\addplot [color=mycolor4, line width=1.25pt, mark=square*]
  table[row sep=crcr]{%
1	0.763418551084656\\
2	0.0862059875663052\\
3	0.00318587796986847\\
4	5.3201187627545e-05\\
5	3.10173550411105e-06\\
6	3.2672362407243e-07\\
7	4.43053372016611e-08\\
8	6.43083506394177e-09\\
};
\addlegendentry{RNG\qquad}

\addplot [color=mycolor2, line width=1.25pt, mark=triangle*, mark repeat=2]
  table[row sep=crcr]{%
1	0.610734840890338\\
2	0.246603921134293\\
3	0.0473435394424309\\
4	0.0134288603445182\\
5	0.011329399461423\\
6	0.00459244074773144\\
7	0.00109353562736028\\
8	0.000492102056737164\\
9	0.000145854326011998\\
10	8.12626629323937e-05\\
11	4.16014495284152e-05\\
12	1.35778247616138e-05\\
13	3.70901533425626e-06\\
14	8.46108779337772e-07\\
15	3.41251372375299e-07\\
16	1.00939222714745e-07\\
17	2.78887611045877e-08\\
18	7.43570320734485e-09\\
};
\addlegendentry{SCF\qquad}

\addplot [color=mycolor3, line width=1.25pt, mark=*, mark repeat=2]
  table[row sep=crcr]{%
1	0.763418551084656\\
2	0.743429590416211\\
3	0.733716835838145\\
4	0.362948279955931\\
5	0.121881029816756\\
6	0.0560839971033364\\
7	0.0315812392792368\\
8	0.0209575570793224\\
9	0.0115065428982942\\
10	0.00294450070543436\\
11	0.00197365144616014\\
12	0.00134320941015117\\
13	0.00113889610132987\\
14	0.000897028629195143\\
15	0.000751992262891356\\
16	0.000606446805465877\\
17	0.000520716813160518\\
18	0.000405080991275552\\
19	0.000339015577287257\\
20	0.00027324769815556\\
21	0.000235926764449647\\
22	0.000188635409591681\\
23	0.000151845866211585\\
24	0.000119272228642876\\
25	8.93807865344081e-05\\
26	6.82201503264399e-05\\
27	3.67274975533339e-05\\
28	5.66992950347838e-05\\
29	1.4342650438164e-05\\
30	1.24495435045155e-05\\
31	1.14931218487066e-05\\
32	5.41726350094208e-06\\
33	4.90101462254643e-06\\
34	2.12703694364855e-06\\
35	1.38053543944895e-06\\
36	1.39252101943908e-06\\
37	8.816232946425e-07\\
38	7.54385204810657e-07\\
39	6.78382610186459e-07\\
40	3.66392519653515e-07\\
41	3.2677605763088e-07\\
42	1.6799433310879e-07\\
43	1.45783599969543e-07\\
44	1.21239078035452e-07\\
45	1.10759325260135e-07\\
46	6.46601014707111e-08\\
47	5.12210526303722e-08\\
48	3.01703938484146e-08\\
49	2.58557569141732e-08\\
50	2.19411598350612e-08\\
51	2.0026810836857e-08\\
52	1.42931506810338e-08\\
53	1.00342726584645e-08\\
54	6.98433972155356e-09\\
};
\addlegendentry{energy-adapted RGD} 

\end{axis}

%
%

\begin{axis}[%
width=2.1in,
height=1.5in,
at={(2.9in,0.0in)},
scale only axis,
xmin=0.0,
xmax=50,
xlabel={iteration steps},
ymode=log,
ymin=3e-14,
ymax=0.8,
yminorticks=true,
ylabel={$E(\Phi_k)-E_{\min}$},
axis background/.style={fill=white},
]

\addplot [color=mycolor1, line width=1.25pt, mark=o, mark size=3.5]
  table[row sep=crcr]{%
1	0.676372165321368\\
2	0.0173364663545499\\
3	3.06677701473745e-05\\
4	1.99900682673615e-08\\
5	9.03810359886847e-11\\
6	1.47792889038101e-12\\
7	0\\
8	1.70530256582424e-13\\
};

\addplot [color=mycolor4, line width=1.25pt, mark=square*]
  table[row sep=crcr]{%
1	0.676372165321368\\
2	0.0174421437324099\\
3	3.14391504048217e-05\\
4	1.99358964891871e-08\\
5	9.16884346224833e-11\\
6	1.4210854715202e-12\\
7	5.6843418860808e-14\\
8	5.6843418860808e-14\\
};

\addplot [color=mycolor2, line width=1.25pt, mark=triangle*, mark repeat=2]
  table[row sep=crcr]{%
1	0.676372165321368\\
2	0.295081445425353\\
3	0.0128488520533665\\
4	0.00470580606867088\\
5	0.000359595803331558\\
6	0.000205710102932244\\
7	1.15672256697508e-05\\
8	3.46593861877409e-06\\
9	3.9315744970736e-07\\
10	9.84456960395619e-08\\
11	2.91811943498033e-08\\
12	1.2425402928784e-09\\
13	1.20792265079217e-10\\
14	4.71800376544707e-12\\
15	8.5265128291212e-13\\
16	1.70530256582424e-13\\
17	5.6843418860808e-14\\
18	1.13686837721616e-13\\
};

\addplot [color=mycolor3, line width=1.25pt, mark=*, mark repeat=2]
  table[row sep=crcr]{%
1	0.676372165321368\\
2	0.639319620127139\\
3	0.621570930719429\\
4	0.192503820448792\\
5	0.0170848394922132\\
6	0.00268612130491874\\
7	0.00070712883757551\\
8	0.000297583537587798\\
9	9.32459718114842e-05\\
10	1.66376836432391e-05\\
11	1.03955612758e-05\\
12	6.0274628026491e-06\\
13	4.2040724679282e-06\\
14	2.45810679189162e-06\\
15	1.77825774017037e-06\\
16	1.11760761001278e-06\\
17	8.29587293083023e-07\\
18	4.80723826967733e-07\\
19	3.38843108238507e-07\\
20	2.1838616248715e-07\\
21	1.66547181379428e-07\\
22	1.01026216725586e-07\\
23	6.2762524066784e-08\\
24	3.77303308596311e-08\\
25	2.41894895225414e-08\\
26	1.23909558169544e-08\\
27	2.71262479145662e-09\\
28	7.70626229495974e-09\\
29	5.51381162949838e-10\\
30	4.47585080110002e-10\\
31	3.77156084141461e-10\\
32	6.48014975013211e-11\\
33	5.76392267248593e-11\\
34	1.9838353182422e-11\\
35	1.18802745419089e-11\\
36	7.50333128962666e-12\\
37	4.37694325228222e-12\\
38	3.01270119962282e-12\\
39	2.38742359215394e-12\\
40	4.54747350886464e-13\\
41	3.41060513164848e-13\\
42	3.41060513164848e-13\\
43	1.70530256582424e-13\\
44	5.6843418860808e-14\\
45	5.6843418860808e-14\\
46	1.13686837721616e-13\\
47	2.8421709430404e-13\\
48	5.6843418860808e-14\\
49	0\\
50	1.13686837721616e-13\\
51	1.13686837721616e-13\\
52	0\\
53	5.6843418860808e-14\\
54	5.6843418860808e-14\\
};

\end{axis}
\end{tikzpicture}%
	\caption{Convergence history for the pentacene molecule: residuals (left) and energy reduction (right).}
	\label{fig:convergencePentacene}
\end{figure}
\subsubsection{Graphene lattice}\label{sect:numerics:KS:exp2}
As the second model, we consider a~graphene lattice consisting of carbon atoms arranged in $9$ hexagons with $p=67$ electron orbitals.  We use a~$32\times\ 55\times 160$  sampling grid for the wave function and get a discretized model of dimension  $n=12279$. Figure~\ref{fig:convergenceGraphene} presents the evolution of the residuals and errors in the energy. We observe again that both Newton methods converge very fast compared to the energy-adaptive RGD method which needs $64$ iterations to achieve the tolerance $10^{-8}$ for the residual. In contrary, the SCF iteration has difficulties to converge. Also other mixing strategies implemented in KSSOLV do not improve the convergence property of SCF for the graphene model. This behaviour may be explained by a missing spectral gap between the excited and non-excited states. A detailed comparison, including the overall CPU time is part of Table~\ref{eq:KSresults}. In this experiment (with the particular implementation and used hardware), one can say that the computational complexity of the methods follows the rule of thumb 
\[
\text{1 step Newton}
\quad\approx\quad \text{2 steps SCF}
\quad\approx\quad \text{4 steps eaRGD}.
\]
Overall, this example clearly shows the supremacy of the Newton approach for more challenging examples. 
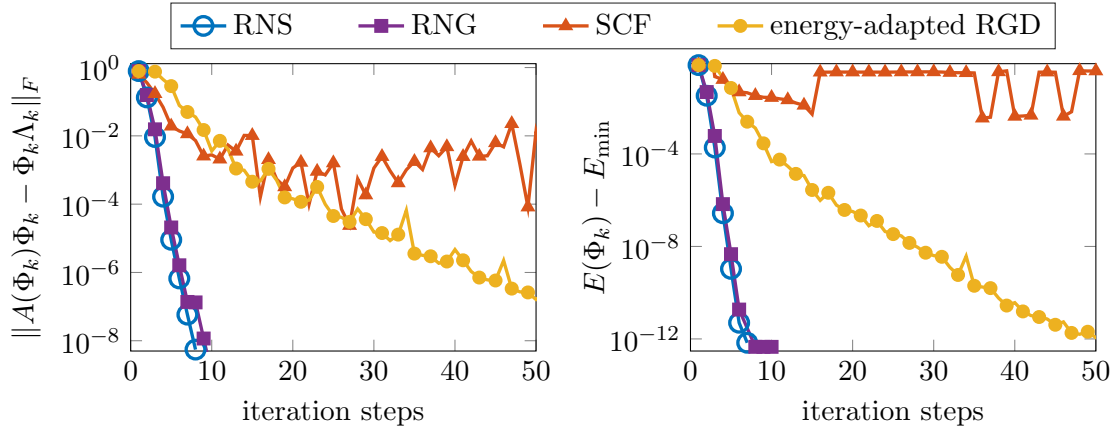
\begin{figure}[h]
%
%
\begin{tikzpicture}

\begin{axis}[%
width=2.1in,
height=1.5in,
at={(0.0in,0.0in)},
scale only axis,
xmin=0.0,
xmax=50,
xlabel={iteration steps},
ymode=log,
ymin=5e-9,
ymax=1.3,
ylabel near ticks,
yminorticks=true,
ylabel={$\|A(\Phi_k)\Phi_k-\Phi_k\Lambda_k\|_F$},
axis background/.style={fill=white},
legend columns = 4,
legend style={legend cell align=left, align=left, at={(1.19,1.05)}, anchor=south, draw=white!15!black}
]

\addplot [color=mycolor1, line width=1.25pt, mark=o, mark size=3.5]
  table[row sep=crcr]{%
1	0.789157381034619\\
2	0.133284597045173\\
3	0.00923354906723674\\
4	0.00016533930333661\\
5	9.04096484648602e-06\\
6	6.66307375727828e-07\\
7	5.80562504590225e-08\\
8	5.50346873449145e-09\\
};
\addlegendentry{RNS\qquad}

\addplot [color=mycolor4, line width=1.25pt, mark=square*]
  table[row sep=crcr]{%
1	0.789157381034619\\
2	0.156513995284413\\
3	0.0155417002275256\\
4	0.000410194589150174\\
5	2.07622967939638e-05\\
6	1.62195933412029e-06\\
7	1.36891933550809e-07\\
8	1.32691908897699e-07\\
9	1.1623038178765e-08\\
10	1.05689156460527e-09\\
};
\addlegendentry{RNG\qquad}

\addplot [color=mycolor2, line width=1.25pt, mark=triangle*, mark repeat=2]
  table[row sep=crcr]{%
1	0.631325904827701\\
2	0.360998671664065\\
3	0.171444189829168\\
4	0.0688779881875539\\
5	0.0191546892669277\\
6	0.0138942779434688\\
7	0.011418762748338\\
8	0.0072761176552412\\
9	0.00254693048543736\\
10	0.00277558575233233\\
11	0.00207718459836053\\
12	0.00538366417075416\\
13	0.00353154953584734\\
14	0.0103053766869891\\
15	0.0101675234083513\\
16	0.000163439817872801\\
17	0.00200124987417178\\
18	0.000540590958294681\\
19	0.000320111678755952\\
20	0.00114195824683009\\
21	0.00166937906754917\\
22	9.88274768594877e-05\\
23	0.00090909319130076\\
24	0.000717328597325182\\
25	0.00160782699413149\\
26	7.16586426820869e-05\\
27	2.36528631180991e-05\\
28	0.000551019599210702\\
29	0.000185225862715035\\
30	0.00113882093095604\\
31	0.00246883890813549\\
32	0.000945109463317894\\
33	0.000418554924585881\\
34	0.00101262028866552\\
35	0.00176157425144311\\
36	0.00103233031687209\\
37	0.00441995505195567\\
38	0.00260950491598576\\
39	0.00477185602044029\\
40	0.000355872794675343\\
41	0.00246155298146531\\
42	0.00707061727059461\\
43	0.00251535099469647\\
44	0.00260349703515596\\
45	0.00626541548781463\\
46	0.00476034776487642\\
47	0.0226779088596933\\
48	0.00150384369771238\\
49	8.20518779812403e-05\\
50	0.0135985229522824\\
51	0.00192619322402335\\
52	0.00735571820743287\\
53	0.00422205833337833\\
54	0.000422490072689081\\
55	0.00069066759402396\\
56	9.8945078155408e-05\\
57	0.000149403892578952\\
58	0.000923916398907472\\
59	0.000346598423828861\\
60	0.00487246254741944\\
61	0.000196303014791948\\
62	0.000183020559095425\\
63	0.00113684140003415\\
64	0.00229411380450612\\
65	0.0017911955921261\\
66	0.000443596908243521\\
67	0.00074611635985785\\
68	0.000979481340225392\\
69	0.000347275060116863\\
70	0.000653551563282048\\
71	0.000662725674047168\\
72	9.91825753120591e-05\\
73	0.00123155887509923\\
74	0.000102045378132028\\
75	0.000121165760295772\\
76	0.000755475825519466\\
77	0.00206005861133624\\
78	0.0026901860204304\\
79	0.000220306908937811\\
80	0.00121030140201641\\
81	0.000927412034775065\\
82	0.00193031788494429\\
83	4.44228742916574e-05\\
84	0.000518095824083885\\
85	0.000344063144498025\\
86	0.000410781940872996\\
87	0.00222499920617796\\
88	0.000260745153227465\\
89	0.000555640742198015\\
90	0.00398983520415842\\
91	0.000336128018220295\\
92	5.93344784626368e-05\\
93	0.0022319319652705\\
94	0.000701300871718884\\
95	0.00281776010322981\\
96	0.00101340636270913\\
97	0.000836658541195903\\
98	0.00026187410429469\\
99	0.000333009911307658\\
100	0.000486571299678415\\
};
\addlegendentry{SCF\qquad}

\addplot [color=mycolor3, line width=1.25pt, mark=*, mark repeat=2]
  table[row sep=crcr]{%
1	0.789157381034619\\
2	0.769382595432285\\
3	0.759530556346563\\
4	0.448146561805064\\
5	0.286446230279962\\
6	0.0777789941944918\\
7	0.0499906841269686\\
8	0.0355530640916504\\
9	0.0148409898723399\\
10	0.00359745720722405\\
11	0.00713749821661088\\
12	0.00412208591894115\\
13	0.00110435989140408\\
14	0.000982851645106313\\
15	0.000454119340116512\\
16	0.000475493049396816\\
17	0.0010789287146293\\
18	0.000503594070556498\\
19	0.000157285841650716\\
20	0.000143545211931814\\
21	0.000117048121018357\\
22	0.000130536017534219\\
23	0.000320057584811724\\
24	9.90803158536712e-05\\
25	4.49858372903834e-05\\
26	4.11148432969137e-05\\
27	3.02658890999706e-05\\
28	7.24884069541188e-05\\
29	3.63668712111011e-05\\
30	1.5229630624881e-05\\
31	1.42594556588984e-05\\
32	8.56435345380458e-06\\
33	1.28166069503673e-05\\
34	6.57645968500905e-05\\
35	3.56002253334299e-06\\
36	3.17530408847339e-06\\
37	2.97555837433178e-06\\
38	1.86091316244904e-06\\
39	2.08867498269702e-06\\
40	4.68069723846984e-06\\
41	2.26786435280508e-06\\
42	1.06960536933268e-06\\
43	7.08450879135005e-07\\
44	6.06865123644374e-07\\
45	5.81029703816048e-07\\
46	2.22967777310854e-06\\
47	3.36322501197875e-07\\
48	2.70746259199353e-07\\
49	2.59876226291712e-07\\
50	1.61032032577731e-07\\
51	2.38997826647913e-07\\
52	1.07973885790536e-06\\
53	1.92461541407434e-07\\
54	8.30487038513595e-08\\
55	6.19774869205472e-08\\
56	7.5642667026465e-08\\
57	4.55942886617535e-08\\
58	3.22142555304986e-08\\
59	2.67670779509939e-08\\
60	4.73215753527752e-08\\
61	4.16083625708835e-08\\
62	1.50345372951208e-08\\
63	1.3447425841797e-08\\
64	9.46451172940898e-09\\
};
\addlegendentry{energy-adapted RGD} 

\end{axis}

%
%

\begin{axis}[%
width=2.1in,
height=1.5in,
at={(2.9in,0.0in)},
scale only axis,
xmin=0.0,
xmax=50,
xlabel={iteration steps},
ymode=log,
ymin=3e-13,
ymax=0.8,
yminorticks=true,
ylabel={$E(\Phi_k)-E_{\min}$},
axis background/.style={fill=white},
]

\addplot [color=mycolor1, line width=1.25pt, mark=o, mark size=3.5]
  table[row sep=crcr]{%
1	0.693755596634446\\
2	0.0328057403987714\\
3	0.000188660219919257\\
4	2.71953922492685e-07\\
5	1.06683728517964e-09\\
6	5.00222085975111e-12\\
7	6.82121026329696e-13\\
};

\addplot [color=mycolor4, line width=1.25pt, mark=square*]
  table[row sep=crcr]{%
1	0.693755596634446\\
2	0.0473862898866173\\
3	0.000597627102024489\\
4	6.75269120620214e-07\\
5	4.62569005321711e-09\\
6	1.8644641386345e-11\\
7	0\\
8	4.54747350886464e-13\\
9	4.54747350886464e-13\\
10	4.54747350886464e-13\\
};

\addplot [color=mycolor2, line width=1.25pt, mark=triangle*, mark repeat=2]
  table[row sep=crcr]{%
1	0.693755596634446\\
2	82.990086002523\\
3	0.221089141899483\\
4	0.164385217669633\\
5	0.0608299793339029\\
6	0.0477350500734701\\
7	0.0441447208081627\\
8	0.0331000166625017\\
9	0.0281204704322136\\
10	0.0274185969899463\\
11	0.0241505826502362\\
12	0.0217795514990939\\
13	0.0159180472667231\\
14	0.0126719215752473\\
15	0.00523346323780061\\
16	0.342614495149292\\
17	0.344025713996643\\
18	0.347509429727552\\
19	0.34627587041382\\
20	0.346864258624009\\
21	0.345218424990662\\
22	0.345035747961674\\
23	0.344899206084165\\
24	0.343390712555447\\
25	0.345877061923602\\
26	0.350794978844078\\
27	0.351121418987304\\
28	0.351278551265978\\
29	0.35476682360968\\
30	0.353594596607763\\
31	0.346923247497898\\
32	0.33180901929768\\
33	0.324997223612399\\
34	0.324213488391024\\
35	0.317787779303217\\
36	0.00352464510433492\\
37	0.00386311838042275\\
38	0.354117586439543\\
39	0.390979005027475\\
40	0.00420657873337404\\
41	0.00436728104409667\\
42	0.00467790676452751\\
43	0.343590636117597\\
44	0.346401229724279\\
45	0.340692425236966\\
46	0.00423898589156124\\
47	0.00664797762510716\\
48	0.394392988201844\\
49	0.385442540917666\\
50	0.385945822479925\\
51	0.319664670466864\\
52	0.324319945110346\\
53	0.378052381510088\\
54	0.392787517789657\\
55	0.3951938337093\\
56	0.393110763679942\\
57	0.394219088668706\\
58	0.395897149934171\\
59	0.402644040020959\\
60	0.399542004726072\\
61	0.351340106387397\\
62	0.348657638772465\\
63	0.346957080163065\\
64	0.350576652182554\\
65	0.365054228833287\\
66	0.365498714126261\\
67	0.357106822804553\\
68	0.364690534011743\\
69	0.355030787983878\\
70	0.35082737070752\\
71	0.360031809294242\\
72	0.358741737526316\\
73	0.35691905044132\\
74	0.344197641922392\\
75	0.34324775867276\\
76	0.344785962213336\\
77	0.352359105422238\\
78	0.339927090335095\\
79	0.366589723682864\\
80	0.366461777402719\\
81	0.372120439402352\\
82	0.383246174153555\\
83	0.397318960881421\\
84	0.397977709679935\\
85	0.403950046511909\\
86	0.401341606183905\\
87	0.398418567113822\\
88	0.413388005945762\\
89	0.413877977234051\\
90	0.413229984702411\\
91	0.441748045678878\\
92	0.444155783603719\\
93	0.444294919768708\\
94	0.459637272042528\\
95	0.454876520767129\\
96	0.468700688651779\\
97	0.472833019797918\\
98	0.467140456885318\\
99	0.469280523674342\\
100	0.467528981003625\\
};

\addplot [color=mycolor3, line width=1.25pt, mark=*, mark repeat=2]
  table[row sep=crcr]{%
1	0.693755596634446\\
2	0.658503097998846\\
3	0.641259182070144\\
4	0.196322349592947\\
5	0.0710942389246156\\
6	0.0060411813265091\\
7	0.00247577165464463\\
8	0.00130872818249372\\
9	0.000286994674524976\\
10	4.65813791379333e-05\\
11	5.67664851587324e-05\\
12	2.93930079351412e-05\\
13	1.37279396312806e-05\\
14	1.16042569970887e-05\\
15	2.76040827884572e-06\\
16	9.80698814601055e-07\\
17	2.08838469006878e-06\\
18	6.11355062574148e-07\\
19	3.78347294827108e-07\\
20	3.23746235153521e-07\\
21	2.17096157939523e-07\\
22	7.84964413469424e-08\\
23	1.26440227177227e-07\\
24	4.42398686573142e-08\\
25	3.41226495947922e-08\\
26	2.8652038963628e-08\\
27	1.41776581585873e-08\\
28	9.25592757994309e-09\\
29	5.34623723069672e-09\\
30	4.01519173465203e-09\\
31	3.55885276803747e-09\\
32	1.24236976262182e-09\\
33	5.82986103836447e-10\\
34	3.77167452825233e-09\\
35	1.98951966012828e-10\\
36	1.75759851117618e-10\\
37	1.5756995708216e-10\\
38	5.84350345889106e-11\\
39	2.77395884040743e-11\\
40	3.70619090972468e-11\\
41	1.54614099301398e-11\\
42	1.02318153949454e-11\\
43	8.86757334228605e-12\\
44	7.27595761418343e-12\\
45	4.09272615797818e-12\\
46	5.45696821063757e-12\\
47	1.81898940354586e-12\\
48	1.59161572810262e-12\\
49	2.04636307898909e-12\\
50	1.13686837721616e-12\\
51	9.09494701772928e-13\\
52	1.13686837721616e-12\\
53	1.13686837721616e-12\\
54	1.13686837721616e-12\\
55	1.13686837721616e-12\\
56	1.13686837721616e-12\\
57	4.54747350886464e-13\\
58	2.27373675443232e-13\\
};

\end{axis}
\end{tikzpicture}%
	\caption{Convergence history for the graphene lattice: residuals (left) and energy reduction (right).}
	\label{fig:convergenceGraphene}
\end{figure}
\begin{table}[th]
	\caption{Numerical results for the pentacene and the graphene models. }
	\label{eq:KSresults}
	\centering
	\begin{tabular}{r||cc|cr|r}		
		method & energy & residual & \#  iter & \# Ham.~eval. & CPU time [$s$] \enskip\; \\[0.2em]
		\hline 
		\hline  
		& \multicolumn{5}{l}{pentacene molecule\hfill $n=44791$, $p=51$} \\[0.1em] \hline
		RNS   & $-1.3189\,e\!+\!2\;$ & $3.2731\,e\!-\!9$ & $\enskip 10$ & $208$ & $1880.64$\\ 
		RNG   & $-1.3189\,e\!+\!2\;$ & $6.4308\,e\!-\!9$ & $\quad 8$    & $172$ & $1572.17$\\ 
		SCF   & $-1.3189\,e\!+\!2\;$ & $7.4357\,e\!-\!9$ & $\enskip 18$ & $276$ & $1709.78$\\ 
		eaRGD & $-1.3189\,e\!+\!2\;$ & $6.9843\,e\!-\!9$ & $\enskip 54$ & $369$ & $1739.08$\\[0.4em] 
		\hline \hline
		& \multicolumn{5}{l}{graphene lattice\hfill $n=12279$, $p=67$} \\[0.1em] \hline
		RNS   & $-1.7360\,e\!+\!2\;$ & $\;5.5035\,e\!-\!9\;$ & $\quad 8$    & $657$   &  $568.69$\\  
		RNG   & $-1.7360\,e\!+\!2\;$ & $\;1.0569\,e\!-\!9\;$ & $\enskip 10$ & $693$   &  $741.55$\\  
		SCF   & $-1.7312\,e\!+\!2\;$ & $\;4.8657\,e\!-\!4\;$ & $100$        & $69616$ &  $3422.67$\\ 
		eaRGD & $-1.7360\,e\!+\!2\;$ & $\;9.4645\,e\!-\!9\;$ & $\enskip 64$ & $914$   &  $991.22$    
	\end{tabular}
\end{table}
%
%
\section{Conclusion}
In this paper, we have derived Riemannian Newton methods on the infinite-dimensional Stiefel and Grassmann manifolds for Kohn--Sham type energy minimization problems. Starting from an energy functional, we present a unified approach for applications in computational physics (e.g., the Gross--Pitaevskii eigenvalue problem) and computational chemistry (e.g., the Kohn--Sham model). 
The remarkable gain in computational efficiency of the Riemannian Newton methods compared to the so far more popular methods such as SCF and gradient descent methods is demonstrated by a series of numerical experiments. 
%
%
\bibliographystyle{alpha}
\bibliography{refKS}
\end{document}